\providecommand{\U}[1]{\protect\rule{.1in}{.1in}}
\providecommand{\U}[1]{\protect\rule{.1in}{.1in}}
\providecommand{\U}[1]{\protect\rule{.1in}{.1in}}
\newtheorem{theorem}{Theorem}
\newtheorem{acknowledgement}[theorem]{Acknowledgement}
\newtheorem{corollary}[theorem]{Corollary}
\newtheorem{proposition}[theorem]{Proposition}
\newtheorem{remark}[theorem]{Remark}
\newenvironment{proof}[1][Proof]{\textbf{#1.} }{\ \rule{0.5em}{0.5em}}
\newdimen\dummy
\begin{document}

\title{A symmetry property for polyharmonic functions vanishing on equidistant hyperplanes}
\author{O. Kounchev, H. Render}
\maketitle

\begin{abstract}
Let $u\left(  t,y\right)  $ be a polyharmonic function of order $N$ defined on
the strip $\left(  a,b\right)  \times\mathbb{R}^{d}$ satisfying the growth
condition
\begin{equation}
\sup_{t\in K}\left\vert u\left(  t,y\right)  \right\vert \leq o\left(
\left\vert y\right\vert ^{\left(  1-d\right)  /2}e^{\frac{\pi}{c}\left\vert
y\right\vert }\right)  \label{eqgrowth}%
\end{equation}
for $\left\vert y\right\vert \rightarrow\infty$ and any compact subinterval
$K$ of $\left(  a,b\right)  $, and suppose that $u\left(  t,y\right)  $
vanishes on $2N-1$ equidistant hyperplanes of the form $\left\{
t_{j}\right\}  \times\mathbb{R}^{d}$ for $t_{j}=t_{0}+jc\in\left(  a,b\right)
$ and $j=-\left(  N-1\right)  ,...,N-1.$ Then it is shown that $u\left(
t,y\right)  $ is odd at $t_{0},$ i.e. that $u\left(  t_{0}+t,y\right)
=-u\left(  t_{0}-t,y\right)  $ for $y\in\mathbb{R}^{d}$. The second main
result states that $u$ is identically zero provided that $u$ satisfies
(\ref{eqgrowth}) and vanishes on $2N$ equidistant hyperplanes with distance
$c.$

\end{abstract}

\section{Introduction}

A function $f:G\rightarrow\mathbb{C}$ defined on a domain $G$ in the euclidean
space $\mathbb{R}^{d}$ is called \emph{polyharmonic of order} $N$ if $f$ is
$2N$ times continuously differentiable and
\[
\Delta^{N}f\left(  x\right)  =0
\]
for all $x\in G,$ where $\Delta=\frac{\partial^{2}}{\partial x_{1}^{2}%
}+...+\frac{\partial^{2}}{\partial x_{d}^{2}}$ is the Laplace operator and
$\Delta^{N}$ the $N$-th iterate of $\Delta.$ Polyharmonic functions play an
important role in pure and applied mathematics, and they have been studied
extensivley in \cite{ACL83}, \cite{Avan85}, \cite{Nico35}. In \cite{GGS10}
polyharmonic functions are studied in the context of boundary value problems
for partial differential operators of higher order. In applied mathematics
they are important for multivariate interpolation and spline theory
\cite{Koun00}, \cite{kounchevrenderJAT}, \cite{KoReCM}, \cite{MaNe90}, for
constructing new types of cubature formulae \cite{kounchevRenderKleinDirac},
or for constructing wavelets and subdivision schemes \cite{DKLR11},
\cite{DKLR14}. Polyharmonic functions of order $2$ are called
\emph{biharmonic} \emph{functions} and they play an eminent role in elasticity
theory, cf. the references in \cite{Gomi03}.

An important tool in the theroy of polyharmonic functions is the Almansi
decomposition which was proven in 1899 in \cite{Alma99}: for a polyharmonic
function $f$ of order $N$ defined on a star domain $G$ there exist harmonic
functions $h_{k}:G\rightarrow\mathbb{C}$ such that
\begin{equation}
f\left(  x\right)  =\sum_{k=0}^{N-1}\left\vert x\right\vert ^{2k}h_{k}\left(
x\right)  , \label{eqAlmansi}%
\end{equation}
see e.g. \cite{ACL83}. A simple consequence is the following result: if $G$ is
a ball of radius $R>0$ with center $0$ and if the polyharmonic function $f$
vanishes on the concentric spheres $\left\{  x\in\mathbb{R}^{d}:\left\vert
x\right\vert =t_{j}\right\}  $ for given radii $0<t_{1}<...<t_{N}<R$ then $f$
is identically zero. In other words: a polyharmonic function of order $N$ is
completely determined by its values on $N$ concentric spheres. This result was
generalized in \cite{Rend08} replacing concentric spheres by the boundaries of
ellipsoids in arbitrary position answering positively a question in
\cite{Hayman}.

In this paper we consider a similar question for polyharmonic functions
$u\left(  t,y\right)  $ of order $N$ defined on the strip
\[
\left(  a,b\right)  \times\mathbb{R}^{d}=\left\{  \left(  t,y\right)
:t\in\left(  a,b\right)  \text{ and }y\in\mathbb{R}^{d}\right\}  \text{.}%
\]
We ask under which conditions is it true that

\begin{itemize}
\item[(*)] a polyharmonic function of order $N$ vanishing on $2N$ hyperplanes
of the form $\left\{  t_{j}\right\}  \times\mathbb{R}^{d}$ for real numbers
$t_{1}<...<t_{2N}$ in the open interval $\left(  a,b\right)  $ is identically zero?
\end{itemize}

In passing we mention that this question arises naturally in the context of
polyharmonic interpolation for data functions given on the hyperplanes
$\left\{  t_{j}\right\}  \times\mathbb{R}^{d}$ for $j=1,..,2N$, see
\cite{KoRe14}. The case $N=1$ and $d=2$ already shows that one needs
additional assumptions for a positive answer: the harmonic function $u\left(
t,y\right)  =\sin t\cdot e^{y}$ vanishes on two parallel lines but it is not
the zero function. On the other hand, it is well known that a harmonic
function vanishing on two parallel lines is identically zero if it has
exponential growth of order less than $1.$ Thus one can expect only a positive
answer if $u\left(  t,y\right)  $ satisfies certain growth estimates. 

We shall show that statement (*) is true provided that (i) the points
$t_{1},...,t_{2N}$ are equidistant, i.e. $t_{j}=t_{1}+\left(  j-1\right)  c$
for $j=1,...,2N,$ and (ii) $u\left(  t,y\right)  $ satisfies the growth
estimate
\[
\sup_{t\in\left[  t_{1},t_{2N}\right]  }\left\vert u\left(  t,y\right)
\right\vert \leq o\left(  \left\vert y\right\vert ^{\left(  1-d\right)
/2}e^{\frac{\pi}{c}\left\vert y\right\vert }\right)
\]
for $\left\vert y\right\vert \rightarrow\infty.$ We believe that the result is
true without the assumption (i) of equidistant points but we have been unable
to provide a proof.

The proof of this result is based on a new symmetry property for polyharmonic
functions which is interesting in its own right. Let us say that a function
$u:\left(  a,b\right)  \times\mathbb{R}^{d}\rightarrow\mathbb{C}$ is \emph{odd
at} $t_{0}\in\left(  a,b\right)  $ if
\begin{equation}
u\left(  t_{0}+t,y\right)  =-u\left(  t_{0}-t,y\right)  \label{eqoddd}%
\end{equation}
for all $t\in\left(  t_{0}-c,t_{0}+c\right)  $ and $y\in\mathbb{R}^{d}$ where
$c:=\min\left\{  b-t_{0},a-t_{0}\right\}  .$ We shall use the convention that
$u\left(  t,y\right)  $ is odd if it is odd at the point $t_{0}=0.$ Note that
necessarily $u\left(  t_{0},y\right)  =0$ for all $y\in\mathbb{R}^{d}$ if $u$
is odd at $t_{0}.$ For a \emph{harmonic} function $h\left(  t,y\right)  $ the
Schwarz reflection principle shows that the converse is also true: $h$ is odd
at $t_{0}$ if and only if $h\left(  t_{0},y\right)  =0$ for all $y\in
\mathbb{R}^{d}$. Simple examples show that this is not true for biharmonic functions.

We prove the following symmetry principle: Let $u:\left(  a,b\right)  $
$\times\mathbb{R}^{d}\rightarrow\mathbb{C}$ be biharmonic and suppose that
there exists $t_{1}\in\left(  a,b\right)  $ and $c>0$ such that $a<t_{1}%
-c<t_{1}+c<b$ and
\[
u\left(  t_{1},y\right)  =0\text{ and }u\left(  t_{1}+c,y\right)  =-u\left(
t_{1}-c,y\right)
\]
for all $y\in\mathbb{R}^{d}$. If
\[
\sup_{t\in\left[  t_{1}-c,t_{1}+c\right]  }\left\vert u\left(  t,y\right)
\right\vert \leq o\left(  \left\vert y\right\vert ^{\left(  1-d\right)
/2}e^{\frac{\pi}{c}\left\vert y\right\vert }\right)  \text{ for }\left\vert
y\right\vert \rightarrow\infty
\]
then $u$ is odd at $t_{1}.$ An analogous statement holds for polyharmonic
functions of order $N$.

Let us now outline the paper: in Section $2$ we briefly investigate the
Fourier series of a polyharmonic function $h:\left(  -\pi-\delta,\pi
+\delta\right)  \times\mathbb{R}^{d}\rightarrow\mathbb{C}$ of order $N.$ If
$h\left(  t,y\right)  $ is odd at $t=0$ and $t=\pm\pi$ then it is shown that
the Fourier coefficients
\[
a_{k}\left(  y\right)  =\frac{1}{\pi}\int_{-\pi}^{\pi}h\left(  t,y\right)
\sin ktdt
\]
satisfy the equation $\left(  \Delta_{y}-k^{2}\right)  ^{N}a_{k}\left(
y\right)  =0.$ If in addition $h\left(  t,y\right)  $ satisfies the growth
assumption
\[
\sup_{t\in\left[  -\pi,\pi\right]  }\left\vert t\cdot\left(  \pi^{2}%
-t^{2}\right)  h\left(  t,y\right)  \right\vert \leq o\left(  \left\vert
y\right\vert ^{\left(  1-d\right)  /2}e^{\left\vert y\right\vert }\right)
\]
for $\left\vert y\right\vert \rightarrow\infty$ then one can prove that $h$ is
identically zero using results of Vekua, Rellich and Friedman. This result
will play a fundamental role in the next sections.

In Section $3$ the symmetry principle is proved for biharmonic functions. In
Section $4$ the general case is discussed which follows the same lines but is
technically more involved.

In Section $2$ we encountered polyharmonic functions which are odd for two
different points. In the appendix we prove by elementary means that a
polyharmonic function $u:\left(  a,b\right)  \times\mathbb{R}^{d}%
\rightarrow\mathbb{C}$ possesses a polyharmonic $2\delta$-periodic extension
$\widetilde{u}$ to $\left(  -\infty,\infty\right)  \times\mathbb{R}^{d}$
provided that $u\left(  t,y\right)  $ is odd at two different points
$t_{1}<t_{2}\in\left(  a,b\right)  $ so that $a<t_{1}-\delta$ and
$t_{2}+\delta<b$ for $\delta:=t_{2}-t_{1}$.

\section{Fourier series of polyharmonic functions on a strip}

Harmonic functions on the strip in $\mathbb{R}^{2}$ or on half spaces have
been extensively studied in \cite{Braw71}, \cite{Dura03}, \cite{Gard81},
\cite{Gold79} or \cite{Widd60}. We need extensions of these results to
polyharmonic functions of order $N$ and for general dimension $d$.

\begin{theorem}
\label{ThmFseries2}Let $\delta>0$ and assume that $h:\left(  -\pi-\delta
,\pi+\delta\right)  \times\mathbb{R}^{d}\rightarrow\mathbb{C}$ is a
polyharmonic function of order $N$ which is odd at $t=0$ and $t=\pm\pi.$ Then
the Fourier coefficients $a_{k}\left(  y\right)  $ defined in (\ref{eqFourier}%
) of the function $H_{y}\left(  t\right)  :=h\left(  t,y\right)  $ on $\left[
-\pi,\pi\right]  $ for $y\in\mathbb{R}^{d}$ satisfy the equation
\[
\left(  \Delta_{y}-k^{2}\right)  ^{N}a_{k}\left(  y\right)  =0.
\]

\end{theorem}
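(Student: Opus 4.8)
The plan is to test the equation $\Delta^{N}h=0$ against $\frac1\pi\sin(kt)$ on the interval $[-\pi,\pi]$ and to use integration by parts in $t$ to trade each factor of $\partial_t^2$ for a factor of $-k^2$. Since $\Delta=\partial_t^2+\Delta_y$ and the two operators commute, the binomial theorem gives $\Delta^{N}h=\sum_{j=0}^{N}\binom{N}{j}\partial_t^{2j}\Delta_y^{N-j}h$. Multiplying by $\frac1\pi\sin(kt)$, integrating over $[-\pi,\pi]$, and pulling each $\Delta_y^{N-j}$ outside the $t$-integral (legitimate because $h$ is of class $C^{2N}$ on a neighbourhood of the compact set $[-\pi,\pi]$, so one may differentiate under the integral sign), the whole theorem reduces to the single identity
\[
\frac{1}{\pi}\int_{-\pi}^{\pi}\partial_t^{2j}h(t,y)\sin(kt)\,dt=(-k^2)^{j}a_{k}(y)\qquad(0\le j\le N).
\]
Indeed, granting this, $0=\frac1\pi\int_{-\pi}^{\pi}\Delta^{N}h(t,y)\sin(kt)\,dt=\sum_{j=0}^{N}\binom{N}{j}(-k^2)^{j}\Delta_y^{N-j}a_{k}(y)=(\Delta_y-k^2)^{N}a_{k}(y)$.

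The key input for the identity is the boundary vanishing $\partial_t^{2i}h(\pi,y)=\partial_t^{2i}h(-\pi,y)=0$ for $0\le i\le N-1$ and all $y\in\mathbb{R}^{d}$. This is exactly where the hypothesis that $h$ is odd at $t=\pm\pi$ is used: if $h$ is odd at $t_0$, then $s\mapsto h(t_0+s,y)$ is an odd function of $s$ in a neighbourhood of $0$, and differentiating it an even number of times and evaluating at $s=0$ forces $\partial_t^{2i}h(t_0,y)=0$. (Oddness at $t=0$ is used separately only to guarantee that $H_y(t)=h(t,y)$ is odd on $[-\pi,\pi]$, so that its Fourier series there really is the sine series $\sum_k a_k(y)\sin kt$ and $a_k(y)$ is the natural coefficient.) With the boundary conditions available, I would prove the displayed identity by iterating integration by parts: with $f:=H_y$, one pair of integrations by parts turns $\int_{-\pi}^{\pi}f^{(m)}\sin(kt)\,dt$ into
\[
[f^{(m-1)}(t)\sin(kt)]_{-\pi}^{\pi}-k[f^{(m-2)}(t)\cos(kt)]_{-\pi}^{\pi}-k^{2}\int_{-\pi}^{\pi}f^{(m-2)}(t)\sin(kt)\,dt,
\]
where the first bracket vanishes because $\sin(k\pi)=0$, and the second vanishes because, as $m$ runs through $2j,2j-2,\dots,2$, the index $m-2$ runs through the even values $2j-2,\dots,0$, for which $f^{(m-2)}(\pm\pi)=\partial_t^{m-2}h(\pm\pi,y)=0$. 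After $j$ such pairs one is left with $(-k^2)^{j}\int_{-\pi}^{\pi}f(t)\sin(kt)\,dt=(-k^2)^{j}\pi a_k(y)$, which is the identity.

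The remaining ingredients are routine: that $a_k(y)$ and the relevant difference quotients are controlled uniformly for $y$ in compact sets (so that $a_k\in C^{2N}$ and differentiation under the integral sign is valid) follows from continuity of the derivatives of $h$ of order up to $2N$ together with the compactness of $[-\pi,\pi]$ in the $t$-variable. I do not expect a serious obstacle here; the only thing that needs care is the bookkeeping in the $2j$-fold integration by parts — verifying that precisely the even-order $t$-derivatives $\partial_t^{2i}h$ with $i\le N-1$ occur in the boundary terms, so that the vanishing conditions just established are exactly what is needed, and that the surviving bulk term carries the sign $(-k^2)^{j}$. Once that identity is in place, recognizing $\sum_{j=0}^{N}\binom{N}{j}(-k^2)^{j}\Delta_y^{N-j}$ as the binomial expansion of $(\Delta_y-k^2)^{N}$ completes the argument.
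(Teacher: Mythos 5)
Your proposal is correct and follows essentially the same route as the paper: expand $\Delta^{N}=(\partial_t^{2}+\Delta_y)^{N}$ binomially, differentiate under the integral sign in $y$, and integrate by parts in $t$, with the boundary terms killed by $\sin(k\pi)=0$ together with the vanishing of the even-order $t$-derivatives at $\pm\pi$ forced by oddness there. The only (immaterial) difference is bookkeeping: you pull $\Delta_y^{N-j}$ out of the integral before integrating by parts on $\partial_t^{2j}h$, whereas the paper integrates by parts on $\partial_t^{2l}\Delta_y^{N-l}h$ directly, using that $\Delta_y^{s}h$ inherits the oddness at $\pm\pi$.
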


\begin{proof}
The function $H_{y}\left(  t\right)  :=h\left(  t,y\right)  $ is odd on
$\left[  -\pi,\pi\right]  $ for each $y\in\mathbb{R}^{d}.$ Thus the Fourier
coefficients of $H_{y}$ for the basis function $\cos kt$ vanish. Next consider
the Fourier coefficients
\begin{equation}
a_{k}\left(  y\right)  =\frac{1}{\pi}\int_{-\pi}^{\pi}h\left(  t,y\right)
\sin ktdt\text{. } \label{eqFourier}%
\end{equation}
Clearly $y\longmapsto h\left(  t,y\right)  $ is a $C^{\infty}$-function on
$\mathbb{R}^{d}$, and $t\longmapsto\frac{\partial^{s}}{\partial y_{j}^{s}%
}h\left(  t,y\right)  $ for $s=1,...N,$ is bounded on $\left[  -\pi
-\delta/2,\pi+\delta/2\right]  .$ Using Lebesgue's dominated convergence
theorem we obtain
\[
\Delta_{y}^{N}a_{k}\left(  y\right)  =\frac{1}{\pi}\int_{-\pi}^{\pi}\Delta
_{y}^{N}h\left(  t,y\right)  \cdot\sin ktdt.\text{ }%
\]

Since $h\left(  t,y\right)  $ is polyharmonic of order $N$ we know that
\[
0=\left(  \frac{d^{2}}{dt^{2}}+\Delta_{y}\right)  ^{N}h\left(  t,y\right)
=\Delta_{y}^{N}h\left(  t,y\right)  +%
{\displaystyle\sum_{l=1}^{N}}
\binom{N}{l}\frac{\partial^{2l}}{\partial t^{2l}}\Delta_{y}^{N-l}h\left(
t,y\right)  .
\]
Moreover, $h\left(  t,y\right)  $ is odd at $\pm\pi$ and this obviously
implies that $\Delta_{y}^{s}h\left(  t,y\right)  $ is odd at $\pm\pi,$ so
\[
\frac{\partial^{2l}}{\partial t^{2l}}\Delta_{y}^{N-l}h\left(  \pm\pi,y\right)
=0.
\]
Repeated partial integration shows that
\begin{align*}
\Delta_{y}^{N}a_{k}\left(  y\right)   &  =-%
{\displaystyle\sum_{l=1}^{N}}
\binom{N}{l}\left(  -k^{2}\right)  ^{l}\frac{1}{\pi}\int_{-\pi}^{\pi}%
\Delta_{y}^{N-l}h\left(  t,y\right)  \sin ktdt\\
&  =-%
{\displaystyle\sum_{l=1}^{N}}
\binom{N}{l}\left(  -k^{2}\right)  ^{l}\Delta_{y}^{N-l}a_{k}\left(  y\right)
.
\end{align*}
From this the statement is obvious.
\end{proof}

\begin{remark}
For $N=1$ it suffices to require that $h:\left[  -\pi,\pi\right]
\times\mathbb{R}^{d}\rightarrow\mathbb{C}$ is an odd continuous function which
is harmonic on $\left(  -\pi,\pi\right)  \times\mathbb{R}^{d}$ such that
$h\left(  -\pi,y\right)  =h\left(  \pi,y\right)  =0$ for all $y\in
\mathbb{R}^{d}$. Indeed, by Schwarz's reflection principle one can extend $h$
to a harmonic function on $\left(  -\infty,\infty\right)  \times\mathbb{R}%
^{d}$.
\end{remark}

\begin{remark}
The assumption that $h\left(  t,y\right)  $ vanishes for $t=\pm\pi$ is
essential: the harmonic function $h\left(  t,y\right)  =t\cdot y$ has the
Fourier series
\begin{align*}
h\left(  t,y\right)   &  =\sum_{k=1}^{\infty}a_{k}\left(  y\right) \\
\sin kt\ \text{with }a_{k}\left(  y\right)   &  =\left(  -1\right)
^{k+1}\frac{2y}{k}.
\end{align*}
Clearly $a_{k}$ is not a solution to the equation $\Delta_{y}\left(
a_{k}\right)  =k^{2}a_{k}.$
\end{remark}

Next we need a result which goes back to I. Vekua and F. Rellich in the
1940'ies for the case $N=1$ , see \cite{Veku}, and which can be found in
\cite[p. 228]{Frie56} for general $N$ :

\begin{theorem}
\label{ThmFrie}Let $k$ be a positive number and suppose that $f:\mathbb{R}%
^{d}\rightarrow\mathbb{C}$ is a solution of the equation $\left(  \Delta
_{y}-k^{2}\right)  ^{N}f\left(  y\right)  =0.$ If
\[
\left\vert f\left(  y\right)  \right\vert =o\left(  \left\vert y\right\vert
^{\left(  1-d\right)  /2}e^{k\left\vert y\right\vert }\right)
\]
for $\left\vert y\right\vert \rightarrow\infty$ then $f\left(  y\right)  $ is
identically zero.
\end{theorem}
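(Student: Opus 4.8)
The plan is to reduce the statement to the case $N=1$ by an induction on $N$ resting on interior elliptic estimates, and to dispose of the base case by expanding $f$ into spherical harmonics and invoking the asymptotics of the modified Bessel functions; the point is that the exponent $\left(1-d\right)/2$ in the hypothesis is precisely the one that makes the argument work, which is why the result is sharp.

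For the base case $N=1$ (the case $d=1$ being immediate by solving the constant-coefficient ordinary differential equation directly) I would write $y=r\theta$ with $r=\left\vert y\right\vert$ and $\theta\in S^{d-1}$ and expand $f\left(r\theta\right)=\sum_{n,m}c_{n,m}\left(r\right)Y_{n,m}\left(\theta\right)$ in an orthonormal system of spherical harmonics, $Y_{n,m}$ having degree $n$. Since $f$ solves an elliptic equation with constant coefficients it is real-analytic; in particular $f\left(r\,\cdot\,\right)\in C^{\infty}\left(S^{d-1}\right)$, the expansion converges, and $c_{n,m}\left(r\right)=\int_{S^{d-1}}f\left(r\theta\right)\overline{Y_{n,m}\left(\theta\right)}\,d\sigma\left(\theta\right)$ is smooth on $\left[0,\infty\right)$. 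Writing $\Delta_{y}$ in polar coordinates shows that $c_{n,m}$ satisfies
\[
c_{n,m}^{\prime\prime}+\frac{d-1}{r}c_{n,m}^{\prime}-\Bigl(k^{2}+\frac{n\left(n+d-2\right)}{r^{2}}\Bigr)c_{n,m}=0,
\]
which after the substitution $c_{n,m}\left(r\right)=r^{-\left(d-2\right)/2}S_{n,m}\left(r\right)$ is the modified Bessel equation of order $\nu=n+\tfrac{d-2}{2}$; smoothness of $f$ at the interior point $0$ rules out the $K_{\nu}$-component, so $c_{n,m}\left(r\right)=\lambda_{n,m}\,r^{-\left(d-2\right)/2}I_{\nu}\left(kr\right)$ for some constant $\lambda_{n,m}$. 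From $I_{\nu}\left(z\right)\sim\left(2\pi z\right)^{-1/2}e^{z}$ as $z\rightarrow\infty$ one obtains $c_{n,m}\left(r\right)\sim c_{\nu}\,\lambda_{n,m}\,r^{-\left(d-1\right)/2}e^{kr}$ with $c_{\nu}\neq 0$, whereas the growth hypothesis gives $\left\vert c_{n,m}\left(r\right)\right\vert\leq\left\Vert Y_{n,m}\right\Vert_{L^{1}(S^{d-1})}\sup_{\theta}\left\vert f\left(r\theta\right)\right\vert=o\bigl(r^{\left(1-d\right)/2}e^{kr}\bigr)$. Hence $\lambda_{n,m}=0$ for every $n,m$, and therefore $f\equiv 0$.

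For the inductive step I would assume the theorem for exponent $N-1\geq 1$, take $f$ with $\left(\Delta_{y}-k^{2}\right)^{N}f=0$ satisfying the growth bound, and set $g:=\left(\Delta_{y}-k^{2}\right)f$, so that $\left(\Delta_{y}-k^{2}\right)^{N-1}g=0$. Interior elliptic estimates for the operator $\left(\Delta_{y}-k^{2}\right)^{N}$ furnish a constant $C$ with $\left\vert g\left(y\right)\right\vert\leq C\sup_{\left\vert z-y\right\vert\leq 1}\left\vert f\left(z\right)\right\vert$ for all $y$; since $\left\vert y\right\vert-1\leq\left\vert z\right\vert\leq\left\vert y\right\vert+1$ on that ball and $r\mapsto r^{\left(1-d\right)/2}e^{kr}$ is eventually monotone increasing, the right-hand side is $o\bigl(\left\vert y\right\vert^{\left(1-d\right)/2}e^{k\left\vert y\right\vert}\bigr)$. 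Thus $g$ satisfies the hypothesis with exponent $N-1$, so $g\equiv 0$ by the induction hypothesis, i.e. $\left(\Delta_{y}-k^{2}\right)f=0$; the base case then yields $f\equiv 0$.

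The main obstacle is the base case, and inside it the bookkeeping of the exact power of $\left\vert y\right\vert$: one needs the sharp leading asymptotics of $I_{\nu}$, and one must check that integration over the compact sphere $S^{d-1}$ converts the pointwise little-$o$ estimate for $f$ into the same little-$o$ estimate for each coefficient $c_{n,m}$. A variant avoiding the induction treats the iterated modified Bessel equation directly: its radial solutions regular at the origin are spanned by the functions $r^{-\left(d-2\right)/2}\partial_{k^{2}}^{j}I_{\nu}\left(kr\right)$, $0\leq j\leq N-1$, whose leading asymptotics are constant multiples of $r^{j}\,r^{-\left(d-1\right)/2}e^{kr}$; since these powers of $r$ are pairwise distinct, no nonzero combination can be $o\bigl(r^{\left(1-d\right)/2}e^{kr}\bigr)$, and again every coefficient is forced to vanish. (Equivalently, $\left(\Delta_{y}-k^{2}\right)^{N}$ has no nonzero $L^{2}$ solution, because the spectrum of $\Delta_{y}$ lies in $\left(-\infty,0\right]$, so no regular radial solution can decay at infinity.)
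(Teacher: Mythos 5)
The paper offers no proof of Theorem \ref{ThmFrie} to compare against: it is imported as a known result of Vekua and Rellich (for $N=1$) and Friedman (for general $N$), with the reader directed to \cite[p. 228]{Frie56}. Judged on its own, your reconstruction is correct and follows the classical line. The base case is the Rellich--Vekua argument: expansion in spherical harmonics, reduction of each radial coefficient to the modified Bessel equation of order $\nu=n+\frac{d-2}{2}$, elimination of the $K_{\nu}$-branch by smoothness at the origin (for $d=2$, $n=0$ the singularity of $K_{0}$ is only logarithmic, but that still suffices), and the asymptotics $I_{\nu}(z)\sim(2\pi z)^{-1/2}e^{z}$, which is exactly what makes the exponent $(1-d)/2$ critical. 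Your induction on $N$ via interior elliptic estimates is a clean and correct way to pass to general $N$: the estimate $|g(y)|\leq C\sup_{|z-y|\leq 1}|f(z)|$ holds with a translation-invariant constant because $(\Delta_{y}-k^{2})^{N}$ is elliptic with constant coefficients, and the comparability of $r^{(1-d)/2}e^{kr}$ over unit intervals does transfer the little-$o$ bound to $g$. Two minor points. First, in the alternative you sketch, the claim that the functions $r^{-(d-2)/2}\partial_{k^{2}}^{j}I_{\nu}(kr)$, $0\leq j\leq N-1$, span all radial solutions regular at the origin needs a word of justification (linear independence follows from the distinct leading powers $r^{j}$ you quote, and regularity from differentiating the power series of $I_{\nu}$ under $\partial_{k^{2}}$); this is not needed for your main argument. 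Second, the closing parenthetical is not an equivalent formulation: the hypothesis $o(|y|^{(1-d)/2}e^{k|y|})$ permits exponential growth, so the absence of $L^{2}$ or decaying solutions is strictly weaker than the theorem. Neither point affects the validity of the proof.
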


\begin{corollary}
\label{Cor2}Let $\delta>0$ and assume that $h:\left(  -\pi-\delta,\pi
+\delta\right)  \times\mathbb{R}^{d}\rightarrow\mathbb{C}$ is polyharmonic of
order $N$ which is odd at $t=0$ and $t=\pm\pi.$ If
\begin{equation}
\sup_{t\in\left[  -\pi,\pi\right]  }\left\vert t\cdot\left(  \pi^{2}%
-t^{2}\right)  h\left(  t,y\right)  \right\vert \leq o\left(  \left\vert
y\right\vert ^{\left(  1-d\right)  /2}e^{\left\vert y\right\vert }\right)
\label{eqbound}%
\end{equation}
for $\left\vert y\right\vert \rightarrow\infty$ then $h\left(  t,y\right)  $
is identically zero.
\end{corollary}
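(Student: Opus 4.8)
The plan is to combine Theorem \ref{ThmFseries2} with the Vekua--Rellich--Friedman theorem (Theorem \ref{ThmFrie}) via a Fourier-series argument. First I would apply Theorem \ref{ThmFseries2} to conclude that each Fourier coefficient $a_k(y)=\frac{1}{\pi}\int_{-\pi}^{\pi}h(t,y)\sin kt\,dt$ satisfies $(\Delta_y-k^2)^N a_k(y)=0$ for every $k\geq 1$. The strategy is then to show that each $a_k$ satisfies the growth bound $|a_k(y)|=o\bigl(|y|^{(1-d)/2}e^{k|y|}\bigr)$, so that Theorem \ref{ThmFrie} forces $a_k\equiv 0$; since $h(t,y)$ is odd in $t$ on $[-\pi,\pi]$ its cosine coefficients already vanish, so all Fourier coefficients of $t\mapsto h(t,y)$ vanish, hence $h(t,y)=0$ for $t\in[-\pi,\pi]$ and each $y$, and finally unique continuation (or simply the polyharmonicity together with $h$ vanishing on a set of positive measure, or just oddness at $\pm\pi$ propagating) extends this to the whole strip.

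The crux is the growth estimate for $a_k$, and here the weight $t(\pi^2-t^2)$ in hypothesis (\ref{eqbound}) is exactly what is needed. For a fixed $k\geq 1$ I would not estimate $a_k$ directly but rather integrate by parts to move derivatives off $\sin kt$. Writing $\sin kt = -\frac{1}{k}\frac{d}{dt}\cos kt$ and integrating by parts, or better, using that $\sin kt$ vanishes together with enough of its behaviour — the point is to rewrite
\[
a_k(y)=\frac{1}{\pi}\int_{-\pi}^{\pi} h(t,y)\sin kt\,dt
\]
in a form where the integrand is controlled by $|t(\pi^2-t^2)h(t,y)|$. One clean route: since $h$ is odd at $\pm\pi$ and at $0$, the function $h(t,y)$ vanishes at $t=-\pi,0,\pi$, so $h(t,y)/\bigl(t(\pi^2-t^2)\bigr)$ extends continuously in $t$ on $[-\pi,\pi]$ (one should check the function is smooth enough in $t$, which follows from polyharmonicity), and then
\[
a_k(y)=\frac{1}{\pi}\int_{-\pi}^{\pi}\frac{h(t,y)}{t(\pi^2-t^2)}\cdot t(\pi^2-t^2)\sin kt\,dt,
\]
so $|a_k(y)|\leq C_k \sup_{t\in[-\pi,\pi]}|t(\pi^2-t^2)h(t,y)|$ with $C_k$ depending only on $k$ (via $\sup_t |\sin kt|/\bigl(t(\pi^2-t^2)\bigr)$ — wait, that blows up at the endpoints). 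The correct pairing is the other way: bound $|a_k(y)|\le \frac1\pi\int_{-\pi}^\pi \bigl|t(\pi^2-t^2)h(t,y)\bigr|\cdot\frac{|\sin kt|}{|t(\pi^2-t^2)|}\,dt$, and since $|\sin kt|/|t(\pi^2-t^2)|$ is integrable on $[-\pi,\pi]$ for each $k\ge1$ (it has only integrable singularities at $0,\pm\pi$ because $\sin kt$ has simple zeros there), we get $|a_k(y)|\le C_k\sup_{t\in[-\pi,\pi]}|t(\pi^2-t^2)h(t,y)|= o\bigl(|y|^{(1-d)/2}e^{|y|}\bigr)$, which is precisely the hypothesis of Theorem \ref{ThmFrie} with $k\ge 1$ so that $e^{|y|}\le e^{k|y|}$.

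Thus Theorem \ref{ThmFrie} gives $a_k\equiv 0$ for all $k\geq 1$, and together with the vanishing cosine coefficients we conclude that for every fixed $y$ the function $t\mapsto h(t,y)$ has identically zero Fourier series on $[-\pi,\pi]$; since it is continuous, $h(t,y)=0$ for all $t\in[-\pi,\pi]$ and all $y\in\mathbb{R}^d$. Finally, $h$ is polyharmonic on the larger strip and vanishes on the open subset $(-\pi,\pi)\times\mathbb{R}^d$, so by the real-analyticity of polyharmonic functions (or by unique continuation for $\Delta^N$) it vanishes on all of $(-\pi-\delta,\pi+\delta)\times\mathbb{R}^d$. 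The main obstacle I anticipate is the bookkeeping in the growth estimate — in particular verifying that $t\mapsto h(t,y)$ is smooth enough that division by $t(\pi^2-t^2)$ is harmless and that the constant $C_k=\frac1\pi\int_{-\pi}^\pi |\sin kt|/|t(\pi^2-t^2)|\,dt$ is genuinely finite and $y$-independent; the rest is a routine application of the quoted theorems.
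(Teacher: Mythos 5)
Your proposal is correct and follows essentially the same route as the paper: apply Theorem \ref{ThmFseries2}, use the weight $t(\pi^{2}-t^{2})$ to convert hypothesis (\ref{eqbound}) into the growth bound $a_{k}(y)=o\bigl(|y|^{(1-d)/2}e^{|y|}\bigr)$, and invoke Theorem \ref{ThmFrie}. Your momentary worry about a blow-up at the endpoints is unfounded --- since $k$ is a positive integer, $\sin kt$ has simple zeros at $t=0,\pm\pi$, so $\sin kt/\bigl(t(\pi^{2}-t^{2})\bigr)$ is in fact bounded on $[-\pi,\pi]$, exactly as the paper asserts.
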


\begin{proof}
By Theorem \ref{ThmFseries2} it suffices to show that the Fourier coefficient
$a_{k}\left(  y\right)  $ defined in (\ref{eqFourier}) is zero. Note that
\[
a_{k}\left(  y\right)  =\frac{1}{\pi}\int_{-\pi}^{\pi}h\left(  t,y\right)
\sin ktdt=\frac{1}{\pi}\int_{-\pi}^{\pi}t\left(  \pi^{2}-t^{2}\right)  \cdot
h\left(  t,y\right)  \frac{\sin kt}{t\left(  \pi^{2}-t^{2}\right)  }dt.
\]
Since $\sin kt/t\left(  \pi^{2}-t^{2}\right)  $ is bounded on the interval
$\left[  -\pi,\pi\right]  $, it is easy to show that assumption (\ref{eqbound}%
) leads to the estimate
\[
a_{k}\left(  y\right)  =o\left(  \left\vert y\right\vert ^{\left(  1-d\right)
/2}e^{\left\vert y\right\vert }\right)  .
\]
Theorem \ref{ThmFrie} completes the proof.
\end{proof}

\section{A symmetry principle for biharmonic functions}

Biharmonic functions are difficult to investigate since they behave rather
differently from their harmonic peers. Sometimes it is instructive to consider
the univariate case: a biharmonic function $f:\left(  a,b\right)
\rightarrow\mathbb{C}$ is just a solution of the equation
\[
\frac{d^{4}}{dx^{4}}f\left(  x\right)  =0,
\]
thus $f\left(  x\right)  $ is a polynomial of degree $3.$ It is well known
that a polynomial of degree $3$ which vanishes in $4$ points is identically
zero. Moreover a polynomial $f$ satisfying $f\left(  0\right)  =0$ and
$f\left(  -c\right)  =-f\left(  c\right)  $ is odd. In this section we provide
analogs of these statements for biharmonic functions.

The following result will be our main tool:

\begin{theorem}
\label{ThmMain1}Suppose that $u\left(  t,y\right)  $ is polyharmonic of order
$N$ on $\left(  a,b\right)  \times\mathbb{R}^{d}$ and let $t_{0}\in\left(
a,b\right)  $ such that $u\left(  t_{0},y\right)  =0$ for all $y\in
\mathbb{R}^{d}$. Define $c:=\min\left\{  b-t_{0},t_{0}-a\right\}  >0.$ Then
there exists an odd polyharmonic function $H_{N-1}$ of order $N-1$ defined on
$\left(  -c,c\right)  \times\mathbb{R}^{d}$ such that
\[
u\left(  t_{0}+t,y\right)  +u\left(  t_{0}-t,y\right)  =t\cdot H_{N-1}\left(
t,y\right)  .
\]

\end{theorem}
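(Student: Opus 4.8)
The plan is to analyse the even part of $u$ about $t_0$ and to show that divisibility by $t$ forces the polyharmonic order to drop by one. After a translation we may assume $t_0=0$. First I would set
\[
\phi(t,y):=u(t_0+t,y)+u(t_0-t,y),\qquad (t,y)\in(-c,c)\times\mathbb{R}^d .
\]
Since the affine reflection $(t,y)\mapsto(t_0-t,y)$ is an isometry of $\mathbb{R}^{d+1}$ it commutes with $\Delta=\partial_t^2+\Delta_y$, so $(t,y)\mapsto u(t_0-t,y)$ is again polyharmonic of order $N$ on $(-c,c)\times\mathbb{R}^d$, hence so is $\phi$. Moreover $\phi$ is even in $t$ and $\phi(0,y)=2u(t_0,y)=0$. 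Because a polyharmonic function solves the elliptic constant-coefficient equation $\Delta^N=0$ it is real-analytic; thus $\phi$ is real-analytic, even in $t$, and vanishes on $\{0\}\times\mathbb{R}^d$, so $H_{N-1}(t,y):=\phi(t,y)/t$ (for $t\neq 0$) extends to a real-analytic function on $(-c,c)\times\mathbb{R}^d$ which is odd in $t$ and satisfies $u(t_0+t,y)+u(t_0-t,y)=t\,H_{N-1}(t,y)$. It then remains only to prove that $H_{N-1}$ is polyharmonic of order $N-1$.

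For that step I would use the commutation identity
\[
\Delta^N(tf)=t\,\Delta^N f+2N\,\partial_t\Delta^{N-1}f ,
\]
which follows from $\Delta(tf)=t\Delta f+2\partial_t f$ by an immediate induction on $N$. Applying it with $f=H_{N-1}$ and using $\Delta^N\phi=\Delta^N(tH_{N-1})=0$ yields
\[
t\,\Delta G+2N\,\partial_t G=0,\qquad G:=\Delta^{N-1}H_{N-1},
\]
where $G$ is real-analytic and, since each factor $\Delta$ preserves parity in $t$, odd in $t$. So everything comes down to showing that a real-analytic function $G$, odd in $t$, solving $t\Delta G+2N\partial_t G=0$ vanishes identically.

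To close the argument I would expand $G(t,y)=\sum_{k\ge 0}g_k(y)\,t^{2k+1}$ locally about $\{0\}\times\mathbb{R}^d$, with real-analytic coefficients $g_k$, insert this into $t\Delta G+2N\partial_t G=0$, and compare coefficients of powers of $t$: the coefficient of $t^0$ gives $2N\,g_0=0$, while for $k\ge 1$ one obtains the recursion $(2k+1)(2k+2N)\,g_k=-\Delta_y g_{k-1}$. Since $g_0\equiv 0$, this forces $g_k\equiv 0$ for all $k$, hence $G\equiv 0$ in a neighbourhood of $\{0\}\times\mathbb{R}^d$ and, by analyticity and connectedness of $(-c,c)\times\mathbb{R}^d$, everywhere; thus $\Delta^{N-1}H_{N-1}=0$, so $H_{N-1}$ is an odd polyharmonic function of order $N-1$. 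I expect the only genuine point of the proof to be this order reduction — the interplay of the commutation identity with the parity of $H_{N-1}$ and its divisibility by $t$ — whereas the real-analyticity input, the division by $t$, and the coefficient recursion are all routine. (For $N=1$ the same computation gives $H_0=G\equiv 0$, which is precisely the Schwarz reflection principle.)
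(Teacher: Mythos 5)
Your proof is correct, but it takes a genuinely different route from the paper. The paper invokes an Almansi-type decomposition with respect to the hyperplane $\{t=0\}$: after translating $t_{0}$ to the origin one writes $\widetilde{u}(t,y)=\sum_{j=0}^{N-1}t^{j}h_{j}(t,y)$ with each $h_{j}$ harmonic, observes that $h_{0}(0,y)=0$ forces $h_{0}$ to be odd by Schwarz reflection, and then the $j=0$ term cancels in $\widetilde{u}(t,y)+\widetilde{u}(-t,y)$ so that a factor $t$ can be pulled out explicitly, leaving $H_{1}(t,y)=\sum_{j=1}^{N-1}t^{j-1}\bigl(h_{j}(t,y)+(-1)^{j}h_{j}(-t,y)\bigr)$, which is visibly a sum of terms $t^{j-1}\times(\text{harmonic})$ and hence polyharmonic of order at most $N-1$. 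You instead divide the even part $\phi$ by $t$ using real-analyticity of polyharmonic functions, and then prove the drop in order directly via the commutation identity $\Delta^{N}(tf)=t\Delta^{N}f+2N\,\partial_{t}\Delta^{N-1}f$ together with a power-series recursion in $t$ that exploits the oddness of $G=\Delta^{N-1}H_{N-1}$; I checked the coefficient computation ($2Ng_{0}=0$ and $(2k+1)(2k+2N)g_{k}=-\Delta_{y}g_{k-1}$) and the parity hypothesis is indeed essential there, since without it $G$ could be a nonzero $t$-independent harmonic function of $y$. The trade-off: the paper's argument is shorter but imports the strip version of the Almansi theorem as a black box, whereas yours is self-contained modulo analytic hypoellipticity of $\Delta^{N}$, makes the order-reduction mechanism explicit, and recovers the Schwarz reflection principle as the case $N=1$. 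Both are sound.
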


\begin{proof}
Define a polyharmonic function $\widetilde{u}$ defined on $\left(
a-t_{0},b-t_{0}\right)  \times\mathbb{R}^{d}$ by $\widetilde{u}\left(
t,y\right)  =u\left(  t+t_{0},y\right)  $. By a theorem of Almansi (see e.g.
\cite{Alma99}, \cite{Avan85}, \cite{Frie56} or \cite{Nyst74}) there exist
harmonic functions $h_{j}:\left(  a-t_{0},b-t_{0}\right)  \times\mathbb{R}%
^{d}\rightarrow\mathbb{C}$ such that%
\[
\widetilde{u}\left(  t,y\right)  =%
{\displaystyle\sum_{j=0}^{N-1}}
t^{j}\cdot h_{j}\left(  t,y\right)
\]
for all $\left(  t,y\right)  \in\left(  a-t_{0},b-t_{0}\right)  \times
\mathbb{R}^{d}$. Since $\widetilde{u}\left(  0,y\right)  =0$ it follows that
$h_{0}\left(  0,y\right)  =0.$ Since $h_{0}$ is harmonic it follows that
$h_{0}\left(  -t,y\right)  =-h_{0}\left(  t,y\right)  .$ Then we obtain that
\[
\widetilde{u}\left(  t,y\right)  +\widetilde{u}\left(  -t,y\right)  =t\cdot
H_{1}\left(  t,y\right)  \text{ }%
\]
for $\left\vert t\right\vert <\min\left\{  b-t_{0},t_{0}-a\right\}  $ where
\[
H_{1}\left(  t,y\right)  =%
{\displaystyle\sum_{j=1}^{N-1}}
t^{j-1}\cdot\left(  h_{j}\left(  t,y\right)  +\left(  -1\right)  ^{j}%
h_{j}\left(  -t,y\right)  \right)
\]
is polyharmonic of order $\leq N-1.$ The function $H_{1}\left(  t,y\right)  $
is odd since $t\longmapsto u\left(  t_{0}+t,y\right)  +u\left(  t_{0}%
-t,y\right)  $ is even.
\end{proof}

\begin{theorem}
\label{ThmSymm}Let $u:\left(  a,b\right)  $ $\times\mathbb{R}^{d}%
\rightarrow\mathbb{C}$ be biharmonic and suppose that there exists $t_{1}%
\in\left(  a,b\right)  $ and $c>0$ such that $a<t_{1}-c<t_{1}+c<b$ and
\[
u\left(  t_{1},y\right)  =0\text{ and }u\left(  t_{1}+c,y\right)  =-u\left(
t_{1}-c,y\right)
\]
for all $y\in\mathbb{R}^{d}$. If
\[
\sup_{t\in\left[  t_{1}-c,t_{1}+c\right]  }\left\vert u\left(  t,y\right)
\right\vert \leq o\left(  \left\vert y\right\vert ^{\left(  1-d\right)
/2}e^{\frac{\pi}{c}\left\vert y\right\vert }\right)
\]
for $\left\vert y\right\vert \rightarrow\infty$ then $u$ is odd at $t=$
$t_{1},$ i.e. that $u\left(  t_{1}+t,y\right)  =-u\left(  t_{1}-t,y\right)  $
for all $t\in\left[  -c,c\right]  $ and all $y\in\mathbb{R}^{d}$.
\end{theorem}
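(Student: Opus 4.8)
The plan is to reduce the biharmonic symmetry principle to Corollary~\ref{Cor2} by rescaling the strip and building an auxiliary polyharmonic function of order $N-1=1$, i.e. a harmonic one, to which the reflection machinery of Section~2 applies. First I would normalize: replace $u(t,y)$ by $v(s,y):=u\!\left(t_{1}+\tfrac{c}{\pi}s,\ \tfrac{c}{\pi}y\right)$, which is biharmonic on $(-\pi-\delta,\pi+\delta)\times\mathbb{R}^{d}$ for some small $\delta>0$ (shrinking the original strip if necessary), satisfies $v(0,y)=0$ and $v(\pi,y)=-v(-\pi,y)$, and inherits the growth bound $\sup_{s\in[-\pi,\pi]}|v(s,y)|\le o\!\left(|y|^{(1-d)/2}e^{|y|}\right)$ because the exponent $\tfrac{\pi}{c}$ exactly cancels the $\tfrac{c}{\pi}$ rescaling of $y$. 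So it suffices to prove: a biharmonic $v$ on $(-\pi-\delta,\pi+\delta)\times\mathbb{R}^{d}$ with $v(0,\cdot)=0$, $v(\pi,\cdot)=-v(-\pi,\cdot)$, and that growth bound, is odd at $0$.

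Next I would apply Theorem~\ref{ThmMain1} at the point $t_{0}=0$: there is an odd \emph{harmonic} function $H_{1}(t,y)$ (order $N-1=1$) on $(-c_{0},c_{0})\times\mathbb{R}^{d}$, $c_{0}:=\min\{\,\text{half-widths}\,\}$, with
\[
v(t,y)+v(-t,y)=t\cdot H_{1}(t,y).
\]
Being harmonic and odd at $0$, $H_{1}$ extends by Schwarz reflection to a harmonic function on all of $(-\infty,\infty)\times\mathbb{R}^{d}$; in particular the identity above persists on $(-\pi-\delta',\pi+\delta')\times\mathbb{R}^{d}$ for a suitable $\delta'>0$. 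The key structural point is to transfer the condition at $\pm\pi$: from $v(\pi,y)=-v(-\pi,y)$ we get $v(\pi,y)+v(-\pi,y)=0$, hence $\pi\cdot H_{1}(\pi,y)=0$, so $H_{1}(\pi,y)=0$ for all $y$, and since $H_{1}$ is already odd at $0$, Schwarz reflection at $\pi$ shows $H_{1}$ is odd at $\pi$ as well (equivalently, $H_{1}$ is $2\pi$-periodic). Thus $H_{1}$ is a harmonic function that is odd at $0$ and at $\pm\pi$ — exactly the hypotheses of Corollary~\ref{Cor2} in the order $N-1=1$ case.

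It then remains to verify the growth hypothesis \eqref{eqbound} for $H_{1}$, namely $\sup_{t\in[-\pi,\pi]}|t(\pi^{2}-t^{2})H_{1}(t,y)|\le o(|y|^{(1-d)/2}e^{|y|})$. On a slightly smaller interval $|t|\le \pi$ one has $H_{1}(t,y)=\bigl(v(t,y)+v(-t,y)\bigr)/t$, but the factor $1/t$ is singular at $0$; this is where the extra vanishing weight $t(\pi^{2}-t^{2})$ earns its keep, turning $1/t$ into the bounded factor $(\pi^{2}-t^{2})$ while $|v(t,y)+v(-t,y)|$ is controlled by $2\sup_{s\in[-\pi,\pi]}|v(s,y)|=o(|y|^{(1-d)/2}e^{|y|})$. (Near $t=0$ one should instead write $H_{1}(t,y)=\int_{0}^{1}\partial_{t}\bigl(v(rt,y)-v(-rt,y)\bigr)dr$ or simply note $H_{1}$ is continuous at $0$ with a value bounded by $\sup|\partial_t v|$ on a compact $t$-interval, which is itself $o(|y|^{(1-d)/2}e^{|y|})$ by an interior-estimate/Cauchy-type argument for the polyharmonic $v$ from the $L^\infty$ bound on a larger $t$-strip.) Once \eqref{eqbound} is checked, Corollary~\ref{Cor2} forces $H_{1}\equiv 0$, whence $v(t,y)+v(-t,y)=0$, i.e. $v$ is odd at $0$; undoing the rescaling gives that $u$ is odd at $t_{1}$. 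The main obstacle I anticipate is precisely the behavior at $t=0$: showing that dividing by $t$ does not destroy the growth estimate, which requires either the weight trick above together with a careful handling of the removable singularity, or a gradient bound on $v$ derived from its sup bound on a marginally larger strip.
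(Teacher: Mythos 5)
Your proposal is correct and follows essentially the same route as the paper: the same $\frac{c}{\pi}$ rescaling, the same application of Theorem \ref{ThmMain1} at $t_{0}=0$ to produce the odd harmonic factor $H_{1}$, and the same reduction to Corollary \ref{Cor2} via the bound $\sup_{t}\left\vert t\left(  \pi^{2}-t^{2}\right)  H_{1}\left(  t,y\right)  \right\vert \leq2\pi^{2}\sup_{t}\left\vert v\left(  t,y\right)  \right\vert$. Your worry about a singularity at $t=0$ is unnecessary since only $t\cdot H_{1}$, not $H_{1}$ itself, needs to be estimated; otherwise the argument, including the Schwarz-reflection step making $H_{1}$ odd at $\pm\pi$ (which the paper delegates to the remark after Theorem \ref{ThmFseries2}), matches the paper's proof.
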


\begin{proof}
Define $\widetilde{u}\left(  t,y\right)  =u\left(  t_{1}+\frac{c}{\pi}%
t,\frac{c}{\pi}y\right)  .$ Then $\widetilde{u}\left(  0,y\right)  =0$ and
$\widetilde{u}\left(  \pi,y\right)  =-\widetilde{u}\left(  -\pi,y\right)  .$
By Theorem \ref{ThmMain1} there exists an odd harmonic function $H$ such that
\[
\widetilde{u}\left(  t,y\right)  +\widetilde{u}\left(  -t,y\right)  =t\cdot
H\left(  t,y\right)  .
\]
It suffices to show that $H\left(  t,y\right)  $ is identically zero. Clearly
$H\left(  \pm\pi,y\right)  =0.$ We can now estimate
\[
\sup_{t\in\left[  -\pi,\pi\right]  }\left\vert t\cdot H\left(  t,y\right)
\right\vert \leq2\sup_{t\in\left[  -\pi,\pi\right]  }\left\vert \widetilde
{u}\left(  t,y\right)  \right\vert =2\sup_{t\in\left[  -\pi,\pi\right]
}\left\vert u\left(  t_{1}+\frac{c}{\pi}t,\frac{c}{\pi}y\right)  \right\vert
.
\]
Now the result follows from Corollary \ref{Cor2}.
\end{proof}

\begin{theorem}
\label{ThmMain2}Let $u:\left(  a,b\right)  $ $\times\mathbb{R}^{d}%
\rightarrow\mathbb{C}$ be biharmonic and suppose that there exists $t_{0}%
\in\left(  a,b\right)  $ and $c>0$ such that $a<t_{0}<t_{0}+3c<b$ and
\[
u\left(  t_{0}+jc,y\right)  =0\text{ and for }j=0,1,2,3
\]
and for all $y\in\mathbb{R}^{d}.$ If
\[
\sup_{t\in\left[  t_{0},t_{0}+3c\right]  }\left\vert u\left(  t,y\right)
\right\vert \leq o\left(  \left\vert y\right\vert ^{\left(  1-d\right)
/2}e^{\frac{\pi}{c}\left\vert y\right\vert }\right)
\]
then $u$ is identically zero.
\end{theorem}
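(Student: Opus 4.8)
The plan is to reduce the four-vanishing-hyperplanes hypothesis to the symmetry principle of Theorem \ref{ThmSymm} applied at a suitable point, and then to bootstrap: once $u$ is odd at one point we want to deduce it is also odd at a second point, use the resulting periodicity (via the appendix) to land in the setting of Corollary \ref{Cor2}, and conclude $u\equiv 0$. First I would rescale and translate, setting $\widetilde{u}(t,y)=u\bigl(t_{0}+\tfrac{c}{\pi}t,\tfrac{c}{\pi}y\bigr)$, so that $\widetilde{u}$ is biharmonic on a strip containing $[0,3\pi]\times\mathbb{R}^{d}$, vanishes on the hyperplanes $t=0,\pi,2\pi,3\pi$, and satisfies $\sup_{t\in[0,3\pi]}|\widetilde{u}(t,y)|=o\bigl(|y|^{(1-d)/2}e^{|y|}\bigr)$.

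The key observation is that Theorem \ref{ThmSymm} applies at the middle points. Consider the point $t=\pi$: we have $\widetilde{u}(\pi,y)=0$, and $\widetilde{u}(\pi+\pi,y)=\widetilde{u}(2\pi,y)=0=\widetilde{u}(0,y)=-\widetilde{u}(\pi-\pi,y)$ trivially, but the distance available in $[0,3\pi]$ around $\pi$ is only $\pi$, so the reflected value $\widetilde{u}(\pi-c',y)$ vs $\widetilde{u}(\pi+c',y)$ comparison must be set up with the step size equal to $\pi$ (in the rescaled variable), which is exactly the spacing between consecutive vanishing hyperplanes. Thus with $t_{1}=\pi$ and step $\pi$ we get $\widetilde{u}(t_{1}-\pi,y)=\widetilde{u}(0,y)=0$ and $\widetilde{u}(t_{1}+\pi,y)=\widetilde{u}(2\pi,y)=0$, so $u(t_{1}+\pi,y)=-u(t_{1}-\pi,y)$ holds; the growth hypothesis on $[0,2\pi]$ holds a fortiori; hence Theorem \ref{ThmSymm} yields that $\widetilde{u}$ is odd at $t=\pi$. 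By the same argument applied at $t=2\pi$ (using the vanishing at $\pi$ and $3\pi$), $\widetilde{u}$ is odd at $t=2\pi$ as well.

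Now $\widetilde{u}$ is odd at the two distinct points $t_{1}=\pi$ and $t_{2}=2\pi$ with $t_{2}-t_{1}=\pi=\delta$; by the appendix result cited in the introduction, $\widetilde{u}$ extends to a $2\pi$-periodic polyharmonic (in fact biharmonic) function on $(-\infty,\infty)\times\mathbb{R}^{d}$ that is odd at $0$, at $\pm\pi$, and everywhere satisfies the same growth bound on each period. In particular the extension is odd at $t=0$ and at $t=\pm\pi$, and since it is $2\pi$-periodic the factor $t(\pi^{2}-t^{2})$ in the hypothesis of Corollary \ref{Cor2} only improves the estimate $\sup_{t\in[-\pi,\pi]}|t(\pi^{2}-t^{2})\widetilde{u}(t,y)|\le C\sup_{t\in[-\pi,\pi]}|\widetilde{u}(t,y)|=o\bigl(|y|^{(1-d)/2}e^{|y|}\bigr)$. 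Hence Corollary \ref{Cor2} forces $\widetilde{u}\equiv 0$ on the strip, and undoing the rescaling gives $u\equiv 0$.

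The step I expect to be the main obstacle is the careful bookkeeping in the reduction: one must check that Theorem \ref{ThmSymm} is genuinely applicable with the step size $c$ equal to the hyperplane spacing (so that the exponential rate $\pi/c$ in the growth hypothesis of the present theorem matches the rate required by Theorem \ref{ThmSymm}), and that the periodic extension furnished by the appendix really does inherit the growth bound uniformly over all of $\mathbb{R}$ in $t$ — this is where periodicity is essential, since the bound is only assumed on the finite interval $[t_{0},t_{0}+3c]$. One should also confirm that oddness at $t=\pi$ together with $\widetilde{u}(2\pi,y)=0$ is what licenses the second application at $t=2\pi$, i.e. that no additional hypothesis sneaks in; this is routine but must be stated.
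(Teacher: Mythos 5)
Your proposal is correct and follows essentially the same route as the paper: two applications of Theorem \ref{ThmSymm} at the interior hyperplanes $t_{0}+c$ and $t_{0}+2c$ to obtain oddness at both points, followed by an application of Corollary \ref{Cor2}. The only difference is in the final bookkeeping --- the paper deduces oddness at $t_{0}$ directly and then changes variables, whereas you route through the periodic-extension theorem of the appendix; your version is, if anything, more explicit about why the growth bound transfers to the symmetric interval $\left[ -\pi ,\pi \right] $ needed for Corollary \ref{Cor2}.
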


\begin{proof}
Put $t_{j}=t_{0}+jc$ for $j=0,1,2,3.$ We apply Theorem \ref{ThmSymm} to the
point $t_{0}+c$ (we know that $u\left(  t,y\right)  $ vanishes for $t=t_{0}$
and $t_{0}+c$ and $t_{0}+2c)$ and we infer that $u$ is odd at $t_{0}+c.$
Theorem \ref{ThmSymm} applied to the point $t_{0}+2c$ shows that $u$ is odd at
$t_{0}=t_{0}+2c.$ Then it is easy to see that $u$ is as well odd at $t_{0}.$
Using a transformation of variables we can apply Corollary \ref{Cor2} and we
infer that $u$ must be identical zero.
\end{proof}

\section{A symmetry principle for polyharmonic functions}

We want to generalize Theorem \ref{ThmSymm} to the case of polyharmonic
functions $u:\left(  a,b\right)  $ $\times\mathbb{R}^{d}\rightarrow\mathbb{C}$
of order $N$ which vanish on $2N-1$ equidistant hyperplanes $\left\{
t_{j}\right\}  \times\mathbb{R}^{d}.$ By using simple transformations we may
assume that $t_{j}=j\pi$ for $j=0,\pm1,...,\pm\left(  N-1\right)  $ and
$u\left(  t,y\right)  $ is defined for all $\left\vert t\right\vert
<N-1+\delta.$

\begin{theorem}
\label{ThmPoly}Let $\delta>0$ and $c_{N}=\left(  N-1\right)  \pi+\delta$. Let
$u:\left(  -c_{N},c_{N}\right)  $ $\times\mathbb{R}^{d}\rightarrow\mathbb{C}$
be polyharmonic of order $N$ and suppose that
\[
u\left(  j\pi,y\right)  =-u\left(  -j\pi,y\right)  \quad\quad\text{ for
}j=0,1,...,N-1
\]
for all $y\in\mathbb{R}^{d}$ and suppose that
\[
\sup_{t\in\left(  -c_{N},c_{N}\right)  }\left\vert u\left(  t,y\right)
\right\vert \leq o\left(  \left\vert y\right\vert ^{\left(  1-d\right)
/2}e^{\left\vert y\right\vert }\right)
\]
for $\left\vert y\right\vert \rightarrow\infty.$ Then $u$ is odd.
\end{theorem}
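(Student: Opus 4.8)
The plan is to argue by induction on the order $N$, using Theorem \ref{ThmMain1} to strip off one order and then invoking the induction hypothesis to recover an extra reflection symmetry that, unlike in the harmonic case, is not available for free when $N>1$. For the base case $N=1$ the condition with $j=0$ forces $u\left(0,y\right)=0$, and a harmonic function vanishing on $\left\{0\right\}\times\mathbb{R}^{d}$ is odd at $0$ by the Schwarz reflection principle; here no growth hypothesis is needed.

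For the inductive step I assume the theorem for order $N-1$ and let $u$ be as in the statement. I would first observe that the hypotheses say exactly that the even part
\[
v\left(t,y\right):=u\left(t,y\right)+u\left(-t,y\right)
\]
vanishes on the hyperplanes $t=\pm j\pi$ for $j=0,1,\dots,N-1$, and that the desired conclusion $u\left(t,y\right)=-u\left(-t,y\right)$ is equivalent to $v\equiv0$. Since $u\left(0,y\right)=0$, Theorem \ref{ThmMain1} applied at $t_{0}=0$ yields an odd polyharmonic function $H_{N-1}$ of order $N-1$ on $\left(-c_{N},c_{N}\right)\times\mathbb{R}^{d}$ with $v\left(t,y\right)=t\cdot H_{N-1}\left(t,y\right)$. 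Dividing the relations $v\left(j\pi,y\right)=0$ by $j\pi$ shows that $H_{N-1}$ vanishes at $t=\pm\pi,\dots,\pm\left(N-1\right)\pi$ (and at $0$, being odd), so it suffices to prove $H_{N-1}\equiv0$.

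The crucial step is to show that $H_{N-1}$ is odd not only at $0$ but also at $\pi$. For this I would apply the induction hypothesis to the recentred function $s\longmapsto H_{N-1}\left(\pi+s,y\right)$, which is polyharmonic of order $N-1$ on the symmetric interval $\left(-c_{N-1},c_{N-1}\right)$ with $c_{N-1}=\left(N-2\right)\pi+\delta$, and for which $\pi+\left(-c_{N-1},c_{N-1}\right)\subseteq\left(-c_{N},c_{N}\right)$. Its required antisymmetry conditions $H_{N-1}\left(\pi+j\pi,y\right)=-H_{N-1}\left(\pi-j\pi,y\right)$ for $j=0,\dots,N-2$ all hold trivially, because for these $j$ both $\pi+j\pi$ and $\pi-j\pi$ lie among the zeros of $H_{N-1}$ (using oddness to handle the negative arguments). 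The induction hypothesis then gives that $H_{N-1}$ is odd at $\pi$; combined with oddness at $0$, a short computation shows it is also odd at $-\pi$, so $H_{N-1}$ is odd at $0$ and at $\pm\pi$. Invoking Corollary \ref{Cor2} (with order $N-1$; the boundedness of $t\left(\pi^{2}-t^{2}\right)$ on $\left[-\pi,\pi\right]$ converts the sup-bound on $H_{N-1}$ into the form required there) forces $H_{N-1}\equiv0$, whence $v\equiv0$ and $u$ is odd.

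The step I expect to demand the most care is the bookkeeping of the growth estimate: I must pass the bound $\sup_{t}\left\vert u\left(t,y\right)\right\vert=o\left(\left\vert y\right\vert^{\left(1-d\right)/2}e^{\left\vert y\right\vert}\right)$ down to $H_{N-1}=v/t$ in a form usable both by the induction hypothesis (a sup-bound over the recentred interval) and by Corollary \ref{Cor2}. Away from $t=0$ this is immediate division by $t$, but near $t=0$ one must control $H_{N-1}$ by interior, Cauchy-type estimates for polyharmonic functions so that the exponential rate in $\left\vert y\right\vert$ is preserved. The remaining checks, namely that the recentred intervals stay inside $\left(-c_{N},c_{N}\right)$ and that the finitely many antisymmetry conditions match the available zeros, are routine index arithmetic.
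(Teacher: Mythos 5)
Your proposal is correct and, once the induction is unrolled, it generates exactly the same chain of auxiliary functions as the paper: one application of Theorem \ref{ThmMain1} at $0$ gives $v(t,y)=u(t,y)+u(-t,y)=t\,H_{N-1}(t,y)$, and your appeal to the induction hypothesis at the recentred point $\pi$ reproduces the paper's iterated reductions $H_{N-(j-1)}(\pi+t,y)+H_{N-(j-1)}(\pi-t,y)=t\,H_{N-j}(t,y)$, with Corollary \ref{Cor2} closing each level. The genuine difference is in how the growth hypothesis is transported down this chain, which is where the paper invests most of its work. The paper arranges matters so that a sup-bound on the $H_{N-j}$ themselves is never needed: Corollary \ref{Cor2} carries the weight $t(\pi^{2}-t^{2})$, and the theorem with the polynomials $p_{j,l}$ (identity (\ref{eqrepw}), verified in Appendix 2) writes $A_{j}(t)A_{j}(-t)\cdot t\cdot H_{N-(j-1)}$ as a finite polynomial combination of translates of $w_{N}=v$, so the bound (\ref{eqHbound}) is immediate; the authors even remark that the naive recursion fails because of the loose factor $t$. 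Your induction cannot exploit that device, because the inductive hypothesis at order $N-1$ requires a sup-bound on $H_{N-1}(\pi+s,y)$ over an interval that contains $t=0$ (always for $N\geq 3$, and for $N=2$ when $\delta>\pi$), so the interior Cauchy-type estimate you flag is not optional but is the crux of your argument. It does work: writing $H_{N-1}(t,y)=\int_{0}^{1}(\partial_{1}v)(st,y)\,ds$ and using the interior gradient estimate for polyharmonic functions on balls of a fixed radius $r$, together with the elementary fact that replacing $|y|$ by $|y|+r$ changes $o\left(|y|^{(1-d)/2}e^{|y|}\right)$ only by a constant factor, gives the required bound; but this elliptic-estimate ingredient appears nowhere in the paper and must be stated and proved (or precisely cited) for your proof to be complete. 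In exchange you delete the paper's entire combinatorial apparatus (the $p_{j,l}$ theorem and Appendix 2), which is a substantial simplification. Two further points worth making explicit: Corollary \ref{Cor2} yields $H_{N-1}\equiv 0$ only on the sub-strip $(-\pi-\delta,\pi+\delta)\times\mathbb{R}^{d}$, so real-analyticity of polyharmonic functions should be invoked to conclude vanishing on the full strip (the paper silently uses this too); and at your final application of Corollary \ref{Cor2} you need no bound on $H_{N-1}$ at all, since $t(\pi^{2}-t^{2})H_{N-1}(t,y)=(\pi^{2}-t^{2})v(t,y)$ is dominated by $2\pi^{2}\sup_{t}\left\vert u(t,y)\right\vert$.
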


\begin{proof}
We apply Theorem \ref{ThmMain1} to the point $t_{0}=0$. Then there exists a
polyharmonic odd function $H_{N-1}\left(  t,y\right)  $ of order $N-1$ defined
on $\left(  -c_{N},c_{N}\right)  \times\mathbb{R}^{d}$ such that
\[
u\left(  t,y\right)  +u\left(  -t,y\right)  =t\cdot H_{N-1}\left(  t,y\right)
\]
It follows that $H_{N-1}\left(  \pm j\pi,y\right)  =0$ for $j=0,...,N-1.$
Theorem \ref{ThmMain1} applied to the function $H_{N-1}\left(  t,y\right)  $
and the point $\pi$ shows that there exists a polyharmonic odd function
$H_{N-2}\left(  t,y\right)  $ of order $N-2$ defined on $\left(
-c_{N-1},c_{N-1}\right)  \times\mathbb{R}^{d}$ such that
\begin{equation}
H_{N-1}\left(  \pi+t,y\right)  +H_{N-1}\left(  \pi-t,y\right)  =t\cdot
H_{N-2}\left(  t,y\right)  \label{eqNEW1}%
\end{equation}
where $c_{N-1}=c_{N}-\pi.$ Clearly $H_{N-2}\left(  j\pi,y\right)  =0$ for
$j=0,...,N-2.$ Having constructed the odd polyharmonic function $H_{N-\left(
j-1\right)  }$ of order $N-\left(  j-1\right)  $ vanishing on $\left(
j\pi,y\right)  $ for $j=0,..,N-\left(  j-1\right)  $ we define the odd
polyharmonic function $H_{N-j}\left(  t,y\right)  $ of order $N-j$ by Theorem
\ref{ThmMain1} by the equation
\begin{equation}
H_{N-\left(  j-1\right)  }\left(  \pi+t,y\right)  +H_{N-\left(  j-1\right)
}\left(  \pi-t,y\right)  =t\cdot H_{N-j}\left(  t,y\right)  .
\label{eqHrecurs}%
\end{equation}
It follows that $H_{N-1}\left(  j\pi,y\right)  =0$ for $j=0,...,N-j.$ Equation
(\ref{eqrepw}) in the next theorem shows that
\begin{equation}
\sup_{t\in\left[  -\pi,\pi\right]  }\left\vert t\cdot\left(  \pi^{2}%
-t^{2}\right)  H_{N-j}\left(  t,y\right)  \right\vert =o\left(  \left\vert
y\right\vert ^{\left(  1-d\right)  /2}e^{\left\vert y\right\vert }\right)  .
\label{eqHbound}%
\end{equation}
For $j=N-1$ it follows that $H_{1}$ is a harmonic function which is odd at $0$
and $\pi$ and satisfies (\ref{eqHbound}). Corollary \ref{Cor2} shows that
$H_{1}$ is identically $0.$ It follows from (\ref{eqHrecurs}) for $j=N-1$ that
$H_{2}$ is odd at $\pi$. Corollary \ref{Cor2} and (\ref{eqHbound}) show that
$H_{2}$ is zero. Now one can proceed inductively and one obtains that
$H_{1},...,H_{N-1}$ are identically zero, which clearly implies that $u$ is odd.
\end{proof}

\begin{theorem}
Let $u\left(  t,y\right)  $ and $H_{N-j}\left(  t,y\right)  $ as in the proof
of Theorem \ref{ThmPoly} and define%
\begin{align*}
w_{N}\left(  t,y\right)   &  :=t\cdot H_{N-1}\left(  t,y\right)  =u\left(
t,y\right)  +u\left(  -t,y\right) \\
A_{j}\left(  t\right)   &  :=\left(  \pi+t\right)  \left(  2\pi+t\right)
\cdots\left(  \pi j+t\right)  .
\end{align*}
Then the following identity holds
\begin{equation}
A_{j}\left(  t\right)  A_{j}\left(  -t\right)  \cdot t\cdot H_{N-\left(
j-1\right)  }\left(  t,y\right)  =%
{\displaystyle\sum_{l=0}^{j}}
p_{j,l}\left(  t\right)  w_{N}\left(  \left(  j-2l\right)  \pi+t,y\right)
\label{eqrepw}%
\end{equation}
where $p_{j,0}\left(  t\right)  =A_{j}\left(  -t\right)  $ and $p_{j,j}\left(
t\right)  =A_{j}\left(  t\right)  ,$ and $p_{j,l}\left(  t\right)  $ are
polynomials of degree $j$ for $l=1,...,j-1$ defined by
\begin{equation}
p_{j,l}\left(  t\right)  =\binom{j}{l}\prod\limits_{s=0}^{l-1}\left[  \left(
\left(  j-s\right)  \pi-t\right)  \left(  \left(  j-s\right)  \pi+t\right)
\right]  \cdot\prod\limits_{s=0}^{j-1-2l}\left(  \left(  j-l-s\right)
\pi-t\right)  . \label{eqdefpjl}%
\end{equation}

\end{theorem}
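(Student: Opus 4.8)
The plan is to prove the identity \eqref{eqrepw} by induction on $j$, exploiting the defining recursion \eqref{eqHrecurs} together with the base case $w_N(t,y) = t\cdot H_{N-1}(t,y)$. For $j=1$ the recursion reads $H_{N-1}(\pi+t,y) + H_{N-1}(\pi-t,y) = t\cdot H_{N-2}(t,y)$; multiplying by $(\pi+t)(\pi-t)$ and substituting $w_N(\pi\pm t,y) = (\pi\pm t)H_{N-1}(\pi\pm t,y)$ — careful with the sign, since $w_N(\pi-t,y) = (\pi-t)H_{N-1}(\pi-t,y)$ while we want to write $H_{N-1}(\pi-t,y)$ in terms of $w_N$ evaluated at $\pi-t = (1-2\cdot 1)\pi + t$ with $j-2l = -1$ for $l=1$ — one obtains precisely $A_1(t)A_1(-t)\,t\,H_{N-2}(t,y) = A_1(-t)w_N(\pi+t,y) + A_1(t)w_N(-\pi+t,y)$, matching the claimed form with $p_{1,0}(t) = A_1(-t) = \pi - t$ and $p_{1,1}(t) = A_1(t) = \pi+t$.

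For the inductive step, assume \eqref{eqrepw} holds for $j$; I would apply the recursion \eqref{eqHrecurs} in the form $H_{N-(j-1)}(\pi+t,y) + H_{N-(j-1)}(\pi-t,y) = t\cdot H_{N-j}(t,y)$, i.e. evaluate the inductive hypothesis at the arguments $\pi+t$ and $\pi-t$ in place of $t$, add the two resulting identities, and clear the appropriate polynomial denominators. Evaluating \eqref{eqrepw} at $\pi+t$ replaces each $w_N((j-2l)\pi + s, y)$ by $w_N((j-2l+1)\pi + t, y) = w_N((j+1 - 2l)\pi + t, y)$, and at $\pi-t$ it produces $w_N((j-2l)\pi + \pi - t, y) = w_N((j+1-2l)\pi - t, y) = w_N((j+1 - 2(l+1))\pi + t + 2\pi - \pi + \ldots)$; more simply, $w_N$ is an even function of its $t$-argument shift in the sense that $w_N(-s,y) = w_N(s,y)$ — indeed $w_N(t,y) = u(t,y)+u(-t,y)$ is even in $t$ — so $w_N((j+1-2l)\pi - t,y) = w_N((2l - j - 1)\pi + t, y) = w_N((j+1 - 2(j+1-l))\pi + t, y)$. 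Thus re-indexing $l \mapsto j+1-l$ in the second sum aligns it with the first, and adding collects, for each $m = 0,\dots,j+1$, a contribution to the coefficient of $w_N((j+1-2m)\pi + t, y)$; the bookkeeping must reproduce $p_{j+1,m}(t)$ as given by \eqref{eqdefpjl}, including the Pascal-type recursion $\binom{j+1}{m} = \binom{j}{m} + \binom{j}{m-1}$ hidden in the combination of the two shifted copies, and the correct adjustment of the $A_j$ prefactors into $A_{j+1}$ (the extra factor being $(\,(j+1)\pi + t\,)(\,(j+1)\pi - t\,)$, which is exactly what multiplying the recursion by $((j+1)\pi)^2 - t^2$ supplies, matched against the $s=0$ term of the first product in \eqref{eqdefpjl}).

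The main obstacle will be the combinatorial identification of the polynomials $p_{j,l}$: one has to check that the two products in \eqref{eqdefpjl} — a "symmetric" block $\prod_{s=0}^{l-1}[((j-s)\pi)^2 - t^2]$ and a one-sided tail $\prod_{s=0}^{j-1-2l}((j-l-s)\pi - t)$ — transform correctly under the shifts $t \mapsto \pi \pm t$ and the re-indexing, and that the binomial coefficients combine via Pascal's rule. I would organize this by first verifying the two boundary cases $p_{j,0}(t) = A_j(-t)$ and $p_{j,j}(t) = A_j(t)$ directly (these track the "all shifts in one direction" extreme terms and are easy), then treating $1 \le l \le j-1$ by writing the tail product's index range transformation explicitly; the degree count (degree $j$ for $1\le l\le j-1$: namely $2l$ from the symmetric block plus $j-2l$ from the tail) is an automatic consistency check. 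A secondary point worth stating is that all manipulations are valid on $(-\pi,\pi)\times\mathbb{R}^d$ where every $H_{N-j}$ is defined, and the polynomial factors $A_j(t)A_j(-t)\cdot t$ are not identically zero there, so \eqref{eqrepw} indeed determines $H_{N-(j-1)}$; the bound \eqref{eqHbound} claimed in the proof of Theorem \ref{ThmPoly} then follows because each $|w_N((j-2l)\pi + t,y)| \le 2\sup_{|s|\le c_N}|u(s,y)|$ and the $p_{j,l}(t)/[A_j(t)A_j(-t)]$ are bounded on $[-\pi,\pi]$ after the remaining factor $t(\pi^2 - t^2)$ is absorbed.
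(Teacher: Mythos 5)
Your proposal follows essentially the same route as the paper's proof: a direct verification for $j=1$, then induction on $j$ by multiplying the recursion (\ref{eqHrecurs}) by exactly the polynomial prefactors needed to turn each $H_{N-\left(  j-1\right)  }\left(  \pi\pm t,y\right)$ into the inductively known combination (using $A_{j}\left(  t\right)  =\left(  \pi+t\right)  A_{j-1}\left(  \pi+t\right)$ and the evenness of $w_{N}$ and of the intermediate functions to realign the two shifted sums), and finally matching the resulting coefficients with $p_{j,l}$ via Pascal's rule and index shifts in the two product blocks of (\ref{eqdefpjl}) --- which is precisely what the paper does, with the coefficient identification carried out in its Appendix 2. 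The only substantive difference is that you announce rather than execute that identification (the check that the combined coefficient of $w_{N}\left(  \left(  j-2l\right)  \pi+t,y\right)$, after dividing out the leftover factor $t$, equals $p_{j,l}\left(  t\right)$); since that tedious but elementary calculation is the real content of the theorem, a complete write-up would still have to perform it as in Appendix 2.
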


\begin{proof}
We consider at first the case $j=1.$ By (\ref{eqNEW1})
\[
tH_{N-2}\left(  t,y\right)  =H_{N-1}\left(  \pi+t,y\right)  +H_{N-1}\left(
\pi-t,y\right)  .
\]
We multiply this identity by $\pi^{2}-t^{2},$ and we verify the validity of
(\ref{eqrepw}) using $A_{1}\left(  t\right)  =\pi+t$:
\[
t\left(  \pi^{2}-t^{2}\right)  H_{N-2}\left(  t,y\right)  =\left(
\pi-t\right)  w_{N}\left(  \pi+t,y\right)  +\left(  \pi+t\right)  w_{N}\left(
\pi-t,y\right)  .
\]
For the general case multiply the equation (\ref{eqHrecurs}) with $t\cdot
A_{j}\left(  t\right)  A_{j}\left(  -t\right)  $
\begin{align}
&  t^{2}\cdot A_{j}\left(  t\right)  A_{j}\left(  -t\right)  \cdot
H_{N-j}\left(  t,y\right) \label{eqH4B}\\
&  =t\cdot A_{j}\left(  t\right)  A_{j}\left(  -t\right)  H_{N-\left(
j-1\right)  }\left(  \pi+t\right)  +t\cdot A_{j}\left(  t\right)  A_{j}\left(
-t\right)  H_{N-\left(  j-1\right)  }\left(  \pi-t\right)  . \label{eqH4C}%
\end{align}
Note that $A_{j}\left(  t\right)  =\left(  \pi+t\right)  A_{j-1}\left(
\pi+t\right)  .$ Since
\[
A_{j-1}\left(  -\left(  t+\pi\right)  \right)  =\left(  -t\right)  \left(
\pi-t\right)  \cdots\left(  \pi\left(  j-2\right)  -t\right)
\]
it follows that
\begin{equation}
\left(  -t\right)  \cdot A_{j}\left(  -t\right)  =\left(  \pi j-t\right)
\left(  \pi\left(  j-1\right)  -t\right)  \cdot A_{j-1}\left(  -\left(
t+\pi\right)  \right)  . \label{eqH4b}%
\end{equation}
If we define
\[
w_{N-j}\left(  t,y\right)  :=A_{j}\left(  t\right)  A_{j}\left(  -t\right)
\cdot t\cdot H_{N-\left(  j-1\right)  }\left(  t,y\right)
\]
then equation (\ref{eqH4B}) shows that
\begin{align}
t\cdot w_{N-j}\left(  t,y\right)   &  =-\left(  \pi j-t\right)  \left(
\pi\left(  j-1\right)  -t\right)  \cdot w_{N-\left(  j-1\right)  }\left(
\pi+t,y\right) \label{eqH5}\\
&  +\left(  \pi j+t\right)  \left(  \pi\left(  j-1\right)  +t\right)  \cdot
w_{N-\left(  j-1\right)  }\left(  t-\pi,y\right)  .\nonumber
\end{align}
where we have used that $A_{j}\left(  -t\right)  =\left(  \pi-t\right)
A_{j-1}\left(  \pi-t\right)  $ and (\ref{eqH4b}) for $-t,$ and in the last
step the fact that $w_{N-\left(  j-1\right)  }$ is even in $t.$ Unfortunately,
this recursion does not give an easy estimate of the function $w_{N-j}\left(
t,y\right)  $ due to the presence of the factor $t$ on the left hand side of
the formula.

Now we prove the existence of the representation (\ref{eqrepw}) by induction
over $j.$ For $j=1$ this has been verified and assume that it is true for
$j-1,$ namely
\[
w_{N-\left(  j-1\right)  }\left(  t,y\right)  =%
{\displaystyle\sum_{l=0}^{j-1}}
p_{j-1,l}\left(  t\right)  w_{N}\left(  \left(  j-1-2l\right)  \pi+t,y\right)
.
\]
Thus we obtain
\begin{align*}
w_{N-\left(  j-1\right)  }\left(  t+\pi\right)   &  =%
{\displaystyle\sum_{l=0}^{j-1}}
p_{j-1,l}\left(  t+\pi\right)  w_{N}\left(  \left(  j-2l\right)  \pi+t\right)
\\
w_{N-\left(  j-1\right)  }\left(  t-\pi\right)   &  =%
{\displaystyle\sum_{l=0}^{j-1}}
p_{j-1,l}\left(  t-\pi\right)  w_{N}\left(  t+\left(  j-2-2l\right)
\pi\right)  .
\end{align*}
Now (\ref{eqH5}) shows that
\[
t\cdot w_{N-j}\left(  t,y\right)  =%
{\displaystyle\sum_{l=0}^{j}}
\widetilde{p}_{j,l}\left(  t\right)  w_{N}\left(  \left(  j-2l\right)
\pi+t,y\right)
\]
where
\begin{align*}
\widetilde{p}_{j,0}\left(  t\right)   &  =-\left(  -\pi j-t\right)  \left(
\pi\left(  j-1\right)  -t\right)  p_{j-1,0}\left(  t+\pi\right)  \\
\widetilde{p}_{j,j}\left(  t\right)   &  =-\left(  \pi j+t\right)  \left(
\pi\left(  j-1\right)  +t\right)  p_{j-1,j-1}\left(  t-\pi\right)
\end{align*}
and for $l=1,...,j-1$
\[
\widetilde{p}_{j,l}\left(  t\right)  =-\left(  \pi j-t\right)  \left(
\pi\left(  j-1\right)  -t\right)  p_{j-1,l}\left(  t+\pi\right)  +\left(  \pi
j+t\right)  \left(  \pi\left(  j-1\right)  +t\right)  p_{j-1,l-1}\left(
t-\pi\right)  .
\]
Straightforward but tedious calculations (see Appendix 2) show that
\[
\frac{\widetilde{p}_{j,l}\left(  t\right)  }{t}=p_{j,l}\left(  t\right)  .
\]

\end{proof}

Following the proof of Theorem \ref{ThmMain2} one may derive from Theorem
\ref{ThmPoly} the main result of the paper :

\begin{theorem}
Let $u:\left(  a,b\right)  $ $\times\mathbb{R}^{d}\rightarrow\mathbb{C}$ be
polyharmonic of order $N$ and suppose that there exist $t_{0}\in\left(
a,b\right)  $ and $c>0$ such that $a<t_{0}<t_{0}+\left(  2N-1\right)  c<b$
and
\[
u\left(  t_{0}+jc,y\right)  =0\text{ \quad\quad for }j=0,1,...,2N-1
\]
and for all $y\in\mathbb{R}^{d}.$ If
\[
\sup_{t\in\left[  t_{0},t_{0}+c\left(  2N-1\right)  \right]  }\left\vert
u\left(  t,y\right)  \right\vert \leq o\left(  \left\vert y\right\vert
^{\left(  1-d\right)  /2}e^{\frac{\pi}{c}\left\vert y\right\vert }\right)
\]
for $\left\vert y\right\vert \rightarrow\infty,$ then $u$ is identically zero.
\end{theorem}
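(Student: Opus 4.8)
The plan is to mimic the proof of Theorem \ref{ThmMain2}, using the symmetry principle of Theorem \ref{ThmPoly} in place of Theorem \ref{ThmSymm}. First I would normalize: setting $\widetilde{u}\left(  t,y\right)  :=u\left(  t_{0}+\frac{c}{\pi}t,\frac{c}{\pi}y\right)$ produces a polyharmonic function of order $N$ that vanishes on the hyperplanes $t=j\pi$ for $j=0,1,\ldots,2N-1$, and whose growth bound becomes $\sup\left\vert \widetilde{u}\left(  t,y\right)  \right\vert \leq o\left(  \left\vert y\right\vert ^{\left(  1-d\right)  /2}e^{\left\vert y\right\vert }\right)$, matching the exponent required by Theorem \ref{ThmPoly} and Corollary \ref{Cor2}. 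The room afforded by $a<t_{0}$ and $t_{0}+\left(  2N-1\right)  c<b$ guarantees that $\widetilde{u}$ is defined on an open interval strictly containing $\left[  0,\left(  2N-1\right)  \pi\right]$, which is what will let the iterated reflections of Theorem \ref{ThmPoly} stay inside the domain.

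Next I would apply the symmetry principle to two overlapping blocks of $2N-1$ consecutive hyperplanes. The first block $\left\{  0,\pi,\ldots,\left(  2N-2\right)  \pi\right\}$ is centred at $\left(  N-1\right)  \pi$; after shifting this centre to the origin, vanishing on all these hyperplanes supplies the antisymmetry hypothesis $\widetilde{u}\left(  \left(  N-1+j\right)  \pi,y\right)  =0=-\widetilde{u}\left(  \left(  N-1-j\right)  \pi,y\right)$ of Theorem \ref{ThmPoly} for $j=0,\ldots,N-1$, so Theorem \ref{ThmPoly} yields that $\widetilde{u}$ is odd at $\left(  N-1\right)  \pi$. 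The second block $\left\{  \pi,2\pi,\ldots,\left(  2N-1\right)  \pi\right\}$ is centred at $N\pi$, and the identical argument shows $\widetilde{u}$ is odd at $N\pi$. These are the exact analogues of the two applications of Theorem \ref{ThmSymm} to $t_{0}+c$ and $t_{0}+2c$ in the biharmonic case.

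Then I would combine the two symmetries as in Theorem \ref{ThmMain2}. Writing $w\left(  t,y\right)  :=\widetilde{u}\left(  \left(  N-1\right)  \pi+t,y\right)$, oddness of $\widetilde{u}$ at $\left(  N-1\right)  \pi$ says that $w$ is odd at $0$, and oddness at $N\pi$ says that $w$ is odd at $\pi$; composing the two reflections $t\mapsto -t$ and $t\mapsto 2\pi-t$ shows that $w$ is $2\pi$-periodic and hence also odd at $-\pi$. Thus $w$ is a polyharmonic function of order $N$ that is odd at $0$ and at $\pm\pi$, and it still satisfies the growth bound on $\left[  -\pi,\pi\right]$ because the corresponding $t$-window $\left[  t_{0}+\left(  N-2\right)  c,t_{0}+Nc\right]$ lies inside the interval on which growth is assumed. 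Corollary \ref{Cor2} then forces $w\equiv 0$, whence $u\equiv 0$.

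The step I expect to require the most care is the bookkeeping in the second paragraph. The growth estimate is assumed only on the convex hull $\left[  t_{0},t_{0}+\left(  2N-1\right)  c\right]$ of all $2N$ hyperplanes, whereas the literal statement of Theorem \ref{ThmPoly} asks for control on the full interval $\left(  -c_{N},c_{N}\right)$ about each centre, which protrudes by $\delta$ below $t_{0}$ for the first block and above $t_{0}+\left(  2N-1\right)  c$ for the second. I would therefore need to confirm that the representation \eqref{eqrepw} and the resulting estimate \eqref{eqHbound} that drive Theorem \ref{ThmPoly} consume the growth of the function only on the closed span of its $2N-1$ symmetry points — a span equal to $\left[  0,\left(  2N-2\right)  \pi\right]$ for the first block and $\left[  \pi,\left(  2N-1\right)  \pi\right]$ for the second, both contained in $\left[  0,\left(  2N-1\right)  \pi\right]$ — with the extra $\delta$-collar used purely to keep the reflections defined, a collar the open strip readily provides. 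Once this is checked, and once one observes that the two oddness centres are separated by exactly one spacing so that the rescaling carries them to $0$ and $\pi$, the combination step and the final appeal to Corollary \ref{Cor2} are routine.
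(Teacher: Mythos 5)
Your proposal is correct and is essentially the paper's own argument: the paper proves this theorem in a single line by saying to follow the proof of Theorem \ref{ThmMain2} with Theorem \ref{ThmPoly} playing the role of Theorem \ref{ThmSymm}, which is exactly your rescaling, two overlapping blocks of $2N-1$ hyperplanes giving oddness at $(N-1)\pi$ and $N\pi$, composition of the two reflections, and final appeal to Corollary \ref{Cor2}. The growth-window mismatch you flag (the hypothesis of Theorem \ref{ThmPoly} is stated on the open interval with its $\delta$-collar, while the present theorem assumes growth only on the closed span of the hyperplanes) is a real point the paper glosses over, and your resolution is correct: in the proof of Theorem \ref{ThmPoly} the estimate (\ref{eqHbound}) comes from (\ref{eqrepw}), which evaluates $w_{N}$ only at arguments $\left(j-2l\right)\pi+t$ with $t\in\left[-\pi,\pi\right]$ and $j\leq N-2$, hence only on the closed span, the collar being needed solely for the domain of definition of the functions $H_{N-j}$.
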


\section{Appendix 1: Periodic extensions of polyharmonic functions}

A reflection law for biharmonic functions at a hyperplane $\left\{
t_{0}\right\}  \times\mathbb{R}^{d}$ was introduced by Poritsky and extended
by Duffin (see \cite{Nyst74}). Unfortunately it is required in these results
that not only $u\left(  t_{0},y\right)  $ but also the normal derivative
$\frac{\partial}{\partial t}u\left(  t_{0},y\right)  $ vanishes for all
$y\in\mathbb{R}^{d}$. Therefore these results could not be used in our setting.

In Section 2 we used the assumption that a polyharmonic function $u\left(
t,y\right)  $ is odd at two points $t_{1}<t_{2}.$ This is a rather strong
assumption: roughly speaking, we shall prove that this implies that $u\left(
t,y\right)  $ is periodic. In order to prove this we need the following
technical result which might be part of mathematical folklore:

\begin{proposition}
\label{PropPeriod}Suppose that $u\left(  t,y\right)  $ is a polyharmonic
function on $\left(  a,b\right)  \times\mathbb{R}^{d}$ and suppose that there
exists a positive constant $c<b-a$ such that
\begin{equation}
u\left(  t,y\right)  =u\left(  t+c,y\right)  \text{ for all }y\in
\mathbb{R}^{d}, \label{eqperiod}%
\end{equation}
and for all $t\in\left(  a,b\right)  $ such that $t+c\in\left(  a,b\right)  $.
Then $u$ possesses a polyharmonic extension $\widetilde{u}$ defined on
$\left(  -\infty,\infty\right)  \times\mathbb{R}^{d}$ such that
(\ref{eqperiod}) holds for $\widetilde{u}$ and for all $t\in\left(
-\infty,\infty\right)  $ and $y\in\mathbb{R}^{d}.$
\end{proposition}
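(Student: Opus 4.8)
The plan is to construct $\widetilde u$ by translating $u$ in the $t$-direction and then to check, in order, that the construction is unambiguous, that it produces a $c$-periodic function, and that it yields a polyharmonic function of order $N$. First note that, because $0<c<b-a$, the shifted open intervals $I_{k}:=\left(a+kc,\,b+kc\right)$ with $k\in\mathbb{Z}$ overlap consecutively, since $a+\left(k+1\right)c<b+kc$, and hence their union is all of $\mathbb{R}$. Thus for every $t\in\mathbb{R}$ there is some $k\in\mathbb{Z}$ with $t-kc\in\left(a,b\right)$, and I would set $\widetilde u\left(t,y\right):=u\left(t-kc,y\right)$.

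The crucial step is to see that $\widetilde u\left(t,y\right)$ does not depend on the choice of $k$. Suppose $t-kc\in\left(a,b\right)$ and $t-k'c\in\left(a,b\right)$ with, say, $k<k'$. Since $\left(a,b\right)$ is an interval and $t-k'c\le t-jc\le t-kc$ for $k\le j\le k'$, every intermediate translate $t-jc$ with $k\le j\le k'$ also lies in $\left(a,b\right)$. For each consecutive pair, putting $s:=t-\left(j+1\right)c$ one has $s\in\left(a,b\right)$ and $s+c=t-jc\in\left(a,b\right)$, so hypothesis (\ref{eqperiod}) gives $u\left(t-\left(j+1\right)c,y\right)=u\left(s,y\right)=u\left(s+c,y\right)=u\left(t-jc,y\right)$. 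Chaining these equalities for $j=k,\dots,k'-1$ yields $u\left(t-kc,y\right)=u\left(t-k'c,y\right)$, which is what is needed. Taking $k=0$ for $t\in\left(a,b\right)$ shows that $\widetilde u$ extends $u$, and periodicity is then immediate: if $t-kc\in\left(a,b\right)$ then $\left(t+c\right)-\left(k+1\right)c=t-kc\in\left(a,b\right)$, whence $\widetilde u\left(t+c,y\right)=u\left(t-kc,y\right)=\widetilde u\left(t,y\right)$.

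It remains to verify polyharmonicity. The operator $\Delta$ is invariant under translations in the $t$ variable, so each map $\left(t,y\right)\mapsto u\left(t-kc,y\right)$ is polyharmonic of order $N$ on $I_{k}\times\mathbb{R}^{d}$; by the well-definedness established above, $\widetilde u$ coincides with this translate on the whole of $I_{k}\times\mathbb{R}^{d}$. Since the sets $I_{k}\times\mathbb{R}^{d}$ form an open cover of $\left(-\infty,\infty\right)\times\mathbb{R}^{d}$ and being $2N$ times continuously differentiable is a local condition, $\widetilde u$ is $2N$ times continuously differentiable, and $\Delta^{N}\widetilde u=0$ holds on each $I_{k}\times\mathbb{R}^{d}$, hence on all of $\left(-\infty,\infty\right)\times\mathbb{R}^{d}$. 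I expect no genuine obstacle beyond the overlap argument of the second paragraph: there one must really use $c<b-a$, which alone guarantees that two translates $t-kc$, $t-k'c$ lying in the strip can be joined through consecutive translates that all stay in the strip, so that (\ref{eqperiod}) — an identity relating values only at $t$-distance $c$ \emph{within} $\left(a,b\right)$ — actually propagates; were the strip shorter than $c$, the hypothesis would carry no information across and the conclusion would be false.
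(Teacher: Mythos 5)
Your proposal is correct and follows essentially the same route as the paper: both define $\widetilde u$ on the overlapping translates $\left(a+kc,b+kc\right)\times\mathbb{R}^{d}$ via $u\left(t-kc,y\right)$ and establish well-definedness by chaining the hypothesis (\ref{eqperiod}) through the intermediate translates, all of which lie in $\left(a,b\right)$ by convexity of the interval. Your added remarks on the covering of $\mathbb{R}$ and the locality of polyharmonicity only make explicit what the paper leaves implicit.
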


\begin{proof}
Define $V_{k}:=$ $\left(  a+kc,b+kc\right)  \times\mathbb{R}^{d}$ for each
integer $k.$ Define a polyharmonic function $u_{k}$ on $V_{k}$ by setting
$u_{k}\left(  x,y\right)  :=u\left(  x-kc,y\right)  .$ Now we define the
extension $\widetilde{u}\left(  t,y\right)  $ by setting $\widetilde{u}\left(
t,y\right)  =u_{k}\left(  t,y\right)  $ whenever $\left(  t,y\right)  \in
V_{k}.$ We have to show the correctness of the definition of $\widetilde{u}.$
Let us suppose that $\left(  t,y\right)  \in V_{k}$ and $\left(  t,y\right)
\in V_{l}$ for two different integers $k,l$. Then $a<t-kc<b$ and $a<t-lc<b.$
We may assume that $k<l.$ We have to show that
\[
u_{k}\left(  t,y\right)  =u\left(  t-kc,y\right)  =u\left(  t-lc,y\right)
=u_{l}\left(  t,y\right)  .
\]
Since $t-kc=t-lc+\left(  l-k\right)  c\in\left(  a,b\right)  $ and $l-k>0$ and
$c>0$ we infer that
\[
t_{j}:=t-lc+jc\in\left(  a,b\right)  \text{ \quad for }j=0,...,l-k.
\]
From our assumption we infer that $u\left(  t_{j},y\right)  =u\left(
t_{j+1},y\right)  $ for $j=0,...,l-k,$ and therefore $u\left(  t_{0},y\right)
=u\left(  t_{l-k},y\right)  $ which is the statement.
\end{proof}

\begin{theorem}
\label{ThmPeriod}Suppose that $u\left(  t,y\right)  $ is a polyharmonic
function on $\left(  a,b\right)  \times\mathbb{R}^{d}$ which is odd at two
different points $t_{1}<t_{2}\in\left(  a,b\right)  .$ If $a<t_{1}-\delta$ and
$t_{2}+\delta<b$ for $\delta:=t_{2}-t_{1}$ then $u$ possesses a polyharmonic
extension $\widetilde{u}$ defined on $\left(  -\infty,\infty\right)
\times\mathbb{R}^{d}$ satisfying
\begin{equation}
\widetilde{u}\left(  t+2\left(  t_{2}-t_{1}\right)  ,y\right)  =\widetilde
{u}\left(  t,y\right)  \text{ \quad\quad for all }\left(  t,y\right)
\in\left(  -\infty,\infty\right)  \times\mathbb{R}^{d}. \label{eqperioduu}%
\end{equation}

\end{theorem}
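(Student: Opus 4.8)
The plan is to convert the two oddness hypotheses into a single $2\delta$-periodicity relation and then invoke Proposition \ref{PropPeriod}. Writing $\delta=t_{2}-t_{1}$, being odd at $t_{1}$ means $u\left(  s,y\right)  =-u\left(  2t_{1}-s,y\right)  $ for $s$ in the symmetric interval $I_{1}:=\left(  t_{1}-c_{1},t_{1}+c_{1}\right)  $ with $c_{1}=\min\left\{  b-t_{1},t_{1}-a\right\}  $, and odd at $t_{2}$ means $u\left(  s,y\right)  =-u\left(  2t_{2}-s,y\right)  $ for $s\in I_{2}:=\left(  t_{2}-c_{2},t_{2}+c_{2}\right)  $ with $c_{2}=\min\left\{  b-t_{2},t_{2}-a\right\}  $. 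The two reflections $s\mapsto2t_{1}-s$ and $s\mapsto2t_{2}-s$ compose to the translation $s\mapsto s+2\delta$, so for $t$ satisfying $t\in I_{1}$ and $2t_{1}-t\in I_{2}$ I may chain the two relations, the two sign changes cancelling, to obtain
\[
u\left(  t+2\delta,y\right)  =-u\left(  2t_{1}-t,y\right)  =u\left(
t,y\right)  .
\]
First I would record the domain of this identity: it holds for $t$ in $J:=I_{1}\cap\left(  t_{1}-\delta-c_{2},t_{1}-\delta+c_{2}\right)  $, the second interval being the preimage of $I_{2}$ under $t\mapsto2t_{1}-t$. Since $a<t_{1}-\delta$ and $t_{2}+\delta<b$ force $c_{1}>\delta$ and $c_{2}>\delta$, the two intervals defining $J$ have radii summing to $c_{1}+c_{2}>2\delta$ while their centres are only $\delta$ apart; hence $J$ is a non-empty open interval.

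The next step is to upgrade the periodicity from $J$ to the full overlap. The hypotheses give $b-a>\left(  t_{2}+\delta\right)  -\left(  t_{1}-\delta\right)  =3\delta$, so in particular $2\delta<b-a$ and both $\left(  t,y\right)  \mapsto u\left(  t,y\right)  $ and $\left(  t,y\right)  \mapsto u\left(  t+2\delta,y\right)  $ are polyharmonic on the connected strip $\left(  a,b-2\delta\right)  \times\mathbb{R}^{d}$, on which they coincide over the non-empty open set $J\times\mathbb{R}^{d}$. Because a polyharmonic function is real-analytic -- for instance via the Almansi decomposition used in the proof of Theorem \ref{ThmMain1} together with the analyticity of harmonic functions -- the identity theorem for real-analytic functions forces the two to agree on all of $\left(  a,b-2\delta\right)  \times\mathbb{R}^{d}$. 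Thus
\[
u\left(  t+2\delta,y\right)  =u\left(  t,y\right)  \quad\text{for all }
t\in\left(  a,b-2\delta\right)  \text{ and }y\in\mathbb{R}^{d},
\]
which is precisely the hypothesis of Proposition \ref{PropPeriod} with $c=2\delta$.

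Finally I would apply Proposition \ref{PropPeriod} to obtain a polyharmonic extension $\widetilde{u}$ on $\left(  -\infty,\infty\right)  \times\mathbb{R}^{d}$ with $\widetilde{u}\left(  t+2\delta,y\right)  =\widetilde{u}\left(  t,y\right)  $ everywhere, which is exactly (\ref{eqperioduu}) since $2\delta=2\left(  t_{2}-t_{1}\right)  $. The main obstacle is the one handled in the second paragraph: the elementary composition of the two reflections only delivers periodicity on the interval $J$, whose length need not reach across the whole overlap $\left(  a,b-2\delta\right)  $ when the two boundary margins $t_{1}-a$ and $b-t_{2}$ are very unequal. Bridging this gap is what forces the appeal to analyticity (alternatively one could bootstrap, propagating the oddness relations outward by $2\delta$ and recombining them, to stay purely elementary); everything else is bookkeeping of intervals and a direct citation of Proposition \ref{PropPeriod}.
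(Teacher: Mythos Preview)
Your proof is correct and follows the same core idea as the paper: compose the two reflections $s\mapsto 2t_{1}-s$ and $s\mapsto 2t_{2}-s$ to obtain a $2\delta$-translation, then invoke Proposition \ref{PropPeriod}. The difference lies only in how the domain obstacle you flag in your final paragraph is handled. The paper sidesteps analyticity at that stage by first restricting $u$ to the sub-strip $\left(t_{1}-\delta,t_{2}+\delta\right)\times\mathbb{R}^{d}$; on this smaller domain the overlap region for Proposition \ref{PropPeriod} with period $2\delta$ is exactly $\left(t_{1}-\delta,t_{1}\right)$, and for every $t$ there the intermediate reflection point $2t_{1}-t$ lands in $\left(t_{1},t_{2}\right)$, so both oddness relations apply directly and the periodicity is verified on the whole overlap by elementary interval chasing. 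Only at the very end does the paper appeal to unique continuation, to check that the periodic extension of the restricted function agrees with the original $u$ on all of $\left(a,b\right)$. Your route---verifying periodicity on the smaller set $J$ and then propagating by real-analyticity before applying Proposition \ref{PropPeriod}---is equally valid but trades an interval computation for an appeal to the identity theorem; the paper's restriction trick is precisely the ``purely elementary'' alternative you allude to.
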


\begin{proof}
We restrict the function $u$ to the domain $\left(  t_{1}-\delta,t_{2}%
+\delta\right)  \times\mathbb{R}^{d}.$ We want to apply Proposition
\ref{PropPeriod} and we check now the validity of its assumption: Let
$t\in\left(  t_{1}-\delta,t_{2}+\delta\right)  $ be given such that
$t+2\delta\in\left(  t_{1}-\delta,t_{2}+\delta\right)  .$ We have to show that
$u\left(  t,y\right)  =u\left(  t+2\delta,y\right)  .$ Note that
$t+2\delta<t_{2}+\delta=t_{1}+2\delta,$ so $t<t_{1}$, and clearly
$t_{1}-\delta<t.$ Consider at first the point $t_{2}+s$ where $s:=t+t_{2}%
-2t_{1}.$ Note that $t_{2}+s=t+2t_{2}-2t_{1}=t+2\delta\in\left(  t_{1}%
-\delta,t_{2}+\delta\right)  \subseteq\left(  a,b\right)  .$ Further
\begin{equation}
t_{2}-s=t_{2}-t-t_{2}+2t_{1}=2t_{1}-t\in\left(  a,b\right)  \label{eqperiod1}%
\end{equation}
since $2t_{1}-t<2t_{1}+\delta-t_{1}=t_{1}+\delta=t_{2}<b$ and $2t_{1}%
-t>2t_{1}-t_{1}=t_{1}>a.$ As $u\left(  t,y\right)  $ is odd at $t_{2}$ we
infer that
\[
u\left(  t+2t_{2}-2t_{1},y\right)  =u\left(  t_{2}+s,y\right)  =-u\left(
t_{2}-s,y\right)  =-u\left(  t_{1}-\left(  t_{1}-t\right)  \right)  .
\]
Next we use that $u\left(  t,y\right)  $ is odd at $t_{1}$. Consider
$\sigma:=t_{1}-t.$ Formula (\ref{eqperiod1}) shows that $t_{1}+\sigma
=2t_{1}-t\in\left(  a,b\right)  $. As $u$ is odd at $t_{1}$ it follows that%
\[
-u\left(  t_{1}-\left(  t_{1}-t\right)  ,y\right)  =u\left(  t,y\right)  .
\]
By Proposition \ref{PropPeriod} there exists a polyharmonic extension
$\widetilde{u}$ of the function $u$ restricted to $\left(  t_{1}-\delta
,t_{2}+\delta\right)  \times\mathbb{R}^{d}.$ Since $u$ and $\widetilde{u}$ are
polyharmonic functions which agree on $\left(  t_{1}-\delta,t_{2}%
+\delta\right)  \times\mathbb{R}^{d}$ it follows that $\widetilde{u}$ is an
extension of the polyharmonic function $u.$ Similarly, the equation
(\ref{eqperioduu}) holds since it is true for the restriction of $u$ to the
open set $\left(  t_{1}-\delta,t_{2}+\delta\right)  \times\mathbb{R}^{d}.$
\end{proof}

\section{Appendix 2}

We provide here a proof for the last statement in the proof of Theorem
\ref{ThmPoly}, namely
\[
\frac{\widetilde{p}_{j,l}\left(  t\right)  }{t}=p_{j,l}\left(  t\right)  .
\]
Indeed, since $p_{j,0}\left(  t\right)  =\prod\limits_{s=0}^{j-1}\left(
\left(  j-s\right)  \pi-t\right)  $ we see that
\begin{align*}
\widetilde{p}_{j,0}\left(  t\right)   &  =-\left(  -\pi j-t\right)  \left(
\pi\left(  j-1\right)  -t\right)  p_{j-1,0}\left(  t+\pi\right) \\
&  =-\left(  \pi j-t\right)  \left(  \pi\left(  j-1\right)  -t\right)
\prod\limits_{s=0}^{j-2}\left(  \left(  j-2-s\right)  \pi-t\right) \\
&  =t\cdot\prod\limits_{s=0}^{j-1}\left(  \left(  j-s\right)  \pi-t\right)
=t\cdot p_{j,0}\left(  t\right)
\end{align*}
and similarly it follows that $\widetilde{p}_{j,j}\left(  t\right)  =t\cdot
p_{j,j}\left(  t\right)  .$ Next we compute $p_{j-1,l}\left(  t+\pi\right)  $
and $p_{j-1,l-1}\left(  t-\pi\right)  $ in order to compute $\widetilde
{p}_{j,l}\left(  t\right)  $. Note that
\[
\prod\limits_{s=0}^{l-1}\left(  \left(  j-1-s\right)  \pi-\left(
\pi+t\right)  \right)  \left(  \left(  j-1-s\right)  \pi+\pi+t\right)
=\prod\limits_{s=0}^{l-1}\left(  \left(  j-2-s\right)  \pi-t\right)  \left(
\left(  j-s\right)  \pi+t\right)
\]
and%
\[
\prod\limits_{s=0}^{l-1}\left(  \left(  j-2-s\right)  \pi-t\right)
=\prod\limits_{s=2}^{l+1}\left(  \left(  j-s\right)  \pi-t\right)  .
\]
We have
\[
\prod\limits_{s=0}^{j-2-2l}\left(  \left(  j-1-l-s\right)  \pi-\left(
\pi+t\right)  \right)  =\prod\limits_{s=0}^{j-2-2l}\left(  \left(
j-l-s-2\right)  \pi-t\right)  =\prod\limits_{s=2}^{j-2l}\left(  \left(
j-l-s\right)  \pi-t\right)
\]
It follows that
\begin{align*}
A  &  =\left(  \pi j-t\right)  \left(  \pi\left(  j-1\right)  -t\right)  \cdot
p_{j-1,l}\left(  \pi+t\right) \\
&  =\binom{j-1}{l}\prod\limits_{s=0}^{l+1}\left(  \left(  j-s\right)
\pi-t\right)  \prod\limits_{s=0}^{l-1}\left(  \left(  j-s\right)
\pi+t\right)  \prod\limits_{s=2}^{j-2l}\left(  \left(  j-l-s\right)
\pi-t\right)
\end{align*}
and therefore
\begin{align*}
&  A=\binom{j-1}{l}\prod\limits_{s=0}^{l-1}\left(  \left(  j-s\right)
\pi-t\right)  \left(  \left(  j-s\right)  \pi+t\right)  \cdot\prod
\limits_{s=0}^{j-2l}\left(  \left(  j-l-s\right)  \pi-t\right) \\
&  =p_{j,l}\left(  t\right)  \frac{j-l}{l}\left(  \left(  j-l-\left(
j-2l\right)  \right)  \pi-t\right)  =p_{j,l}\left(  t\right)  \frac{j-l}%
{l}\left(  l\pi-t\right)  .
\end{align*}
Next we consider $p_{j-1,l-1}\left(  t-\pi\right)  $ . At first we see that
\begin{align*}
&  \prod\limits_{s=0}^{l-2}\left(  \left(  j-1-s\right)  \pi-\left(
t-\pi\right)  \right)  \left(  \left(  j-1-s\right)  \pi+t-\pi\right) \\
&  =\prod\limits_{s=0}^{l-2}\left(  \left(  j-s\right)  \pi-t\right)
\prod\limits_{s=0}^{l-2}\left(  \left(  j-2-s\right)  \pi+t\right)
=\prod\limits_{s=0}^{l-2}\left(  \left(  j-s\right)  \pi-t\right)
\prod\limits_{s=2}^{l}\left(  \left(  j-s\right)  \pi+t\right)
\end{align*}
and
\[
\prod\limits_{s=0}^{j-2-2\left(  l-1\right)  }\left(  \left(  j-1-\left(
l-1\right)  -s\right)  \pi-\left(  t-\pi\right)  \right)  =\prod
\limits_{s=0}^{j-2l}\left(  \left(  j+1-l-s\right)  \pi-t\right)  .
\]
This implies
\begin{align*}
B=  &  \left(  \pi j+t\right)  \left(  \pi\left(  j-1\right)  +t\right)
p_{j-1,l-1}\left(  t-\pi\right) \\
&  =\binom{j-1}{l-1}\prod\limits_{s=0}^{l-2}\left(  \left(  j-s\right)
\pi-t\right)  \prod\limits_{s=0}^{l}\left(  \left(  j-s\right)  \pi+t\right)
\prod\limits_{s=0}^{j-2l}\left(  \left(  j+1-l-s\right)  \pi-t\right) \\
&  =\binom{j-1}{l-1}\prod\limits_{s=0}^{l-1}\left(  \left(  j-s\right)
\pi-t\right)  \prod\limits_{s=0}^{l}\left(  \left(  j-s\right)  \pi+t\right)
\prod\limits_{s=1}^{j-2l}\left(  \left(  j+1-l-s\right)  \pi-t\right) \\
&  =p_{j,l}\left(  t\right)  \frac{l}{j}\left(  \left(  j-l\right)
\pi+t\right)  .
\end{align*}
It follows that
\[
\widetilde{p}_{j,l}\left(  t\right)  =-A+B=p_{j,l}\left(  t\right)  \left[
\frac{l}{j}\left(  \left(  j-l\right)  \pi+t\right)  -\frac{j-l}{j}\left(
l\pi-t\right)  \right]  .
\]
The term in square brackets is equal to $t$ and the claim is proven.

\begin{acknowledgement}
Both authors acknowledge the partial support by the Bulgarian NSF Grant
I02/19, 2015. The first named author acknowledges partial support by the
Humboldt Foundation.
\end{acknowledgement}

\end{document}